\newcommand{\C}{\ensuremath{\mathbb{C}}}
\newcommand{\N}{\ensuremath{\mathbb{N}}}
\newcommand{\ra}{\ensuremath{\rightarrow}}
\newcommand{\D}{\ensuremath{\partial}}
\newcommand{\calo}{\ensuremath{\mathcal{O}}} 
\DeclareMathOperator{\Sing}{Sing}
\DeclareMathOperator{\Der}{Der}
\newcommand{\mc}[1]{\ensuremath{\mathcal{#1}}}   
\newcommand{\mf}[1]{\ensuremath{\mathfrak{#1}}}   
\newtheorem{thm}{Theorem}[section]
\newtheorem{cor}[thm]{Corollary}
\newtheorem{lem}[thm]{Lemma}
\newtheorem{question}[thm]{Question}
\newtheorem{proposition}[thm]{Proposition}
\theoremstyle{definition}
\newtheorem{defin}[thm]{Definition}
\newtheorem{rem}[thm]{Remark}
\newtheorem{exa}[thm]{Example}
\numberwithin{equation}{section}
\title{Towards transversality of singular varieties: splayed divisors}
\author{Eleonore Faber}
\address{
Dept. of Computer and Mathematical Sciences, University of Toronto at Scarborough, Toronto, Ont. M1A 1C4, Canada and Fakult\"at f\"ur Mathematik,
Universit\"at Wien, Austria;
}
\email{efaber@math.toronto.edu, eleonore.faber@univie.ac.at}
\thanks{
\noindent The author has been supported by a For Women in Science award 2011 of L'Or{\'e}al Austria, the Austrian commission for UNESCO and the Austrian Academy of Sciences and by
the Austrian Science Fund (FWF)
in frame of projects J3326 and P21461. This article contains work from the author's Ph.D. dissertation at the Universit\"at Wien. \\
2010 Mathematics Subject Classification: 32S25, 32S10, 13D40 \\
{\it Keywords}: normal crossing divisor, free divisor, logarithmic derivations, Jacobian ideal, Hilbert--Samuel polynomial} 
\begin{document}

\begin{abstract}
We study a natural generalization of transversally intersecting smooth hypersurfaces in a complex manifold: hypersurfaces, whose components intersect in a transversal way but may be themselves
singular. Such
hypersurfaces will be called \emph{splayed\footnotemark} divisors. 
A splayed divisor is characterized by a property of its Jacobian ideal. This yields an effective test for splayedness. Two
further characterizations of a splayed divisor are shown, one reflecting the geometry of the intersection of its components, the other one using K.~Saito's logarithmic derivations. As an
application we prove that a union of smooth hypersurfaces has normal crossings if and only if it is a free divisor and has a radical Jacobian ideal. Further it is shown that the Hilbert--Samuel
polynomials of
splayed divisors satisfy the natural additivity property.
\end{abstract}

\maketitle


\section{Introduction}

\footnotetext[1]{The term ``splayed'' means ``spread out'' or ``made oblique''. So  being ``splayed'' for a hypersurface $D=D_1 \cup D_2$ should indicate that the components $D_1$ and $D_2$  are ``spread out'' the ambient space.}

Let $M_1, M_2$ be two submanifolds of a complex manifold $S$. Then $M_1$ and $M_2$ intersect \emph{transversally} at a point $p \in S$ if their respective tangent spaces add up to the tangent space of
$S$ at $p$, that is,
\begin{equation} \label{Equ:transversalsmooth}
T_pM_1 + T_pM_2=T_pS.
\end{equation}

Transversality is a fundamental concept in algebraic geometry as well as in differential geometry. 
However, in many applications (e.g., embedded resolution of singularities, Hodge structures) one needs a notion of transversality for more than two subspaces. This
leads to the notion of \emph{normal crossings}, which means that several smooth components cross in a transversal way. Another way to phrase this is that the union of several smooth subspaces is
isomorphic
to a union of coordinate subspaces. \\

In this article we study a natural generalization of the concept of transversal intersection of two hypersurfaces in complex manifolds allowing singular components. The geometric idea is that two
singular
hypersurfaces $D_1$ and $D_2$ in a complex manifold $S$ intersect transversally at a point $p$ if their ``tangent spaces'' fill out the whole space and the ideal of their intersection is reduced. The
notion of tangent space for singular hypersurfaces can be made precise by means of logarithmic derivations. In algebraic terms this means: one
can find local coordinates at $p$ such that the
defining equations of the $D_i$ can be chosen in separated variables. We call a union of such transversally
intersecting hypersurfaces  a \emph{splayed  divisor}.  This concept has already appeared in different contexts under different names, for example, J.~Damon \cite{Damon96} called germs of the form $(V_1 \times \C^m) \cup (\C^n \times V_2) \subseteq \C^{n+m}$, where $V_1 \subseteq \C^n$ and $V_2 \subseteq \C^m$, a \emph{product-union}, also see Remark \ref{Rem:Damon}. For an example of a splayed divisor in a threedimensional $S$, see Figure \ref{fig:splayedcusp}.  \\
\begin{figure}[!h]
\begin{tabular}{c@{\hspace{1.5cm}}c}
\includegraphics[width=0.39 \textwidth]{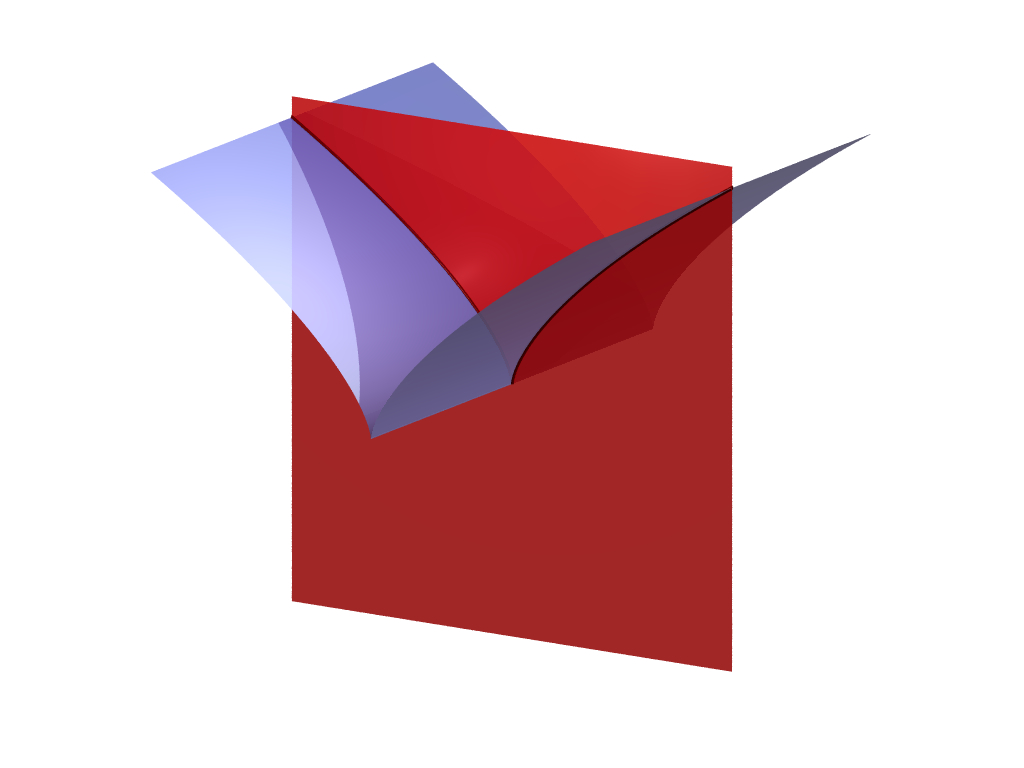}& 
\includegraphics[width=0.39 \textwidth]{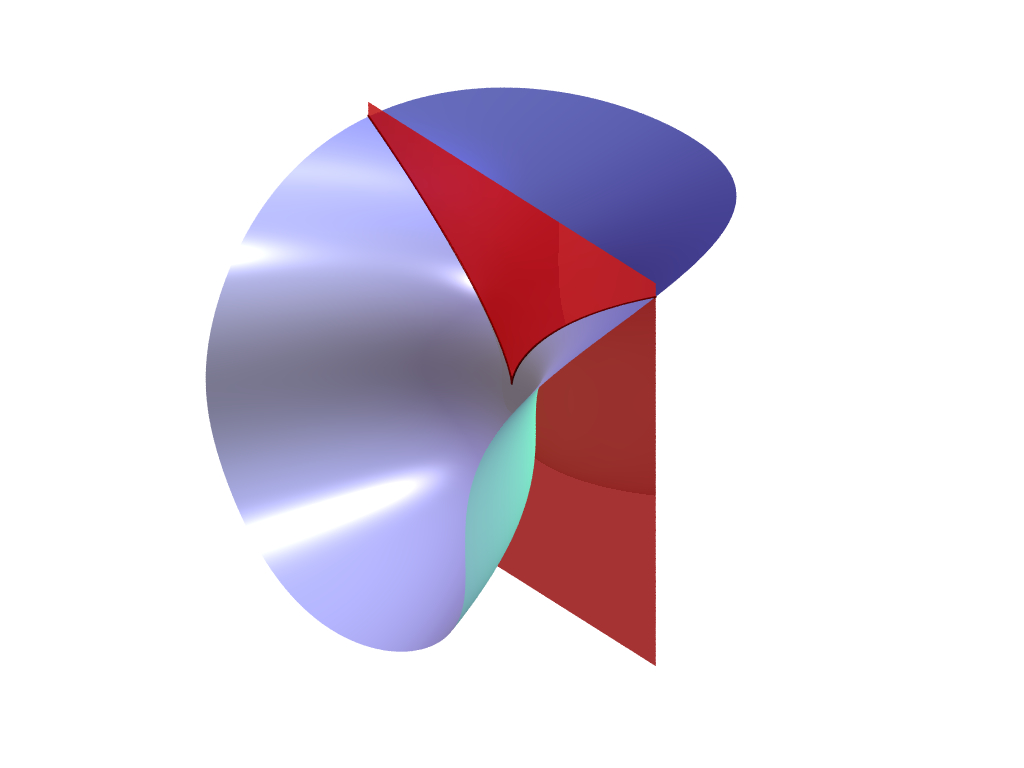}
\end{tabular}
\caption{ \label{fig:splayedcusp}  $D=\{x(y^2-z^3)=0\}$ (left) is splayed and  $D'=\{x(x+y^2-z^3)=0\}$ (right) is not splayed.}
\end{figure}

In the first part of this work three different characterizations of splayed divisors are shown. Suppose that a divisor $D \subseteq S$ is given by $(D,p)=(D_1,p) \cup (D_2,p)=\{gh=0\}$. First we
consider the Jacobian ideal $J_{gh}$ (the ideal generated by the partial derivatives of $gh$) of $D$ at $p$. It is clear that for a splayed $D$, the Jacobian ideal satisfies 
$$ J_{gh}=gJ_h+hJ_g,$$
when the defining equations $g$ and $h$ are chosen in separated variables.
We show (Proposition \ref{Thm:Leibnizprod}) that this property already characterizes splayed divisors. \\
Further, for a splayed divisor $D$, the ideal $((gh)+J_{gh})$ defining its singular locus can
be written as the intersection of the ideals defining the singular loci of the $D_i$ and of the ideal defining the
intersection $D_1 \cap D_2$:
$$((gh)+J_{gh})=(g,h) \cap ((g)+J_g) \cap ((h)+J_h).$$ 
In Proposition \ref{Thm:splayedgeom} the sufficiency of this condition is proved. \\
Finally, equation (\ref{Equ:transversalsmooth}) can be directly translated to singular subspaces via
K.~Saito's logarithmic derivations \cite{Saito80},
namely, $D$ is splayed at $p$ if and only if 
$$ \Der_{S,p}(\log D_1) + \Der_{S,p}(\log D_2)=\Der_{S,p}.$$
This is shown in Proposition \ref{Thm:splayedder}. \\

In the second part, two applications are considered. First
the relationship between splayed divisors and free divisors is studied. Free divisors are a generalization of normal crossing divisors and appear frequently in different areas of
mathematics, for example, in deformation theory as discriminants or in combinatorics as free hyperplane arrangements, see \cite{Aleksandrov86, OrlikTerao92, Buchweitz06, MondSchulze10, BuchweitzMond,
Saito81} for more examples.
Then we give a partial answer to a question of H.~Hauser about the characterization of normal crossing divisors by their Jacobian ideals: it is shown that if $D=\bigcup_{i=1}^n D_i$ is locally the
union of smooth
irreducible components then $D$ has normal crossings if and only if $D$ is locally free and its Jacobian ideal is radical. \\
The computation of singularity invariants of splayed divisors is very interesting, in particular one should be able to deduce them from singularity invariants of their splayed components. We start
with computing the Hilbert--Samuel polynomial $\chi_{D,p}$ of a splayed divisor $(D,p)=(D_1,p) \cup (D_2,p)$ and find that it satisfies an additivity relation: from the exact sequence
$$ 0 \rightarrow \mc{O}_{D,p} \rightarrow \mc{O}_{D_1,p} \oplus \mc{O}_{D_2,p}
\rightarrow \mc{O}_{D_1 \cap D_2,p} \ra 0$$
we deduce 
$$\chi_{D,p}(t)+\chi_{D_1 \cap D_2,p}(t)=\chi_{D_1,p}(t)+\chi_{D_2,p}(t).$$ 
This additivity relation does not characterize
splayed divisors, an example therefore is given at the end.

\section{Setting and triviality lemma} \label{Sec:preliminaries}

We work in the complex analytic category. The main objects of our study are divisors (=hypersurfaces) in complex manifolds. We write $(S,D)$ for a fixed divisor $D$ in an $n$-dimensional complex
manifold $S$.
Denote by $p$ a point in $S$ and by $x=(x_1, \ldots, x_n)$ the complex coordinates of $S$ around $p$. The divisor $(D,p)$ will then be defined locally by an equation $\{
h_p(x_1, \ldots, x_n)=0 \}$ where $h_p \in \mc{O}_{S,p} \cong \C\{x_1, \ldots, x_n\}$ (if the context is clear we leave the subscript $_p$). Note that we will always assume that $h$ is reduced! To
simplify the notation regarding splayed divisors, sometimes the coordinates are denoted by $(x_1, \ldots, x_k,y_{k+1}, \ldots, y_n)$ and $h(x,y) \in \C\{x,y\}$.
The divisor $D$ has \emph{normal crossings} at a point $p$ if one can find complex coordinates $(x_1, \ldots, x_n)$ at $p$ such that the defining equation $h$ of $D$ is $h=x_1 \cdots x_k$ for $k
\leq n$. We also say that $(D,p)$ is a \emph{normal crossing divisor}. 
The \emph{Jacobian ideal} of $h$ is denoted by $J_{h}=(\D_{x_1} h, \ldots, \D_{x_n}h) \subseteq \mc{O}_{S,p}$. 
The image of $J_h$ under the  canonical epimorphism that sends $\mc{O}_{S,p}$ to $\mc{O}_{D,p}=\mc{O}_{S,p}/(h)$ is denoted by $\widetilde{J_h}$. 
The associated analytic coherent ideal sheaves are
denoted by $J \subseteq \mc{O}_{S}$ and $\widetilde J$ in $\mc{O}_{D}$.   
The singular locus $\Sing D$ of $D$ is defined by the ideal sheaf $\widetilde J \subseteq \mc{O}_{D}$. The singular locus $(\Sing D,p)$ is defined by the local ring
$\mc{O}_{\Sing D, p}=\mc{O}_{S,p}/((h) + J_h)$. \\

The next lemma yields an ideal-theoretic characterization of Cartesian product structure. It is used frequently and can be found in various
different formulations in the literature (for example in \cite{dJP,Saito80,GH,CNM96}). 

\begin{lem}[Triviality lemma] \label{Lem:saitotriv}
Let $(S,p)$ locally be isomorphic to $(\C^{n+m},0)$ and denote
$\mc{O}_{S,p}=\C\{ x_1, \ldots, x_n, y_1, \ldots, y_m\}$ and let $h(x_1, \ldots, x_n,$ $y_1, \ldots , y_m)$ be
an element of $\mc{O}_{S,p}$. Then the following are equivalent: \\
(a) The ideal $(\D_{y_1}h, \ldots, \D_{y_m}h)$ is contained in the ideal $(h,
\D_{x_1}h, \ldots, \D_{x_n}h)$. \\
(b) There exists a local biholomorphic map $\varphi: (\C^{n+m},0) \rightarrow
(\C^{n+m},0)$ and a holomorphic $v(x,y) \in \mc{O}_{S,p}^*$ such that
$$\varphi(x,y)=(\varphi_1(x,y), \ldots, \varphi_n(x,y),y_1, \ldots, y_m),$$
$\varphi(x,0)=(x,0)$, $v(x,0) \equiv 1$ and $h \circ \varphi (x,y)=v(x,y)h(x,
0)$.  \\
This means that $D=\{h(x,y)=0\}$ is locally at $p$ isomorphic to some $(D' \times \C^m, (0,0))$ where $D'$ is locally contained in $\C^n$. \\
\end{lem}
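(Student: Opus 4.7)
The plan is to prove the two implications separately. For (b) $\Rightarrow$ (a) I would differentiate the identity $h\circ\varphi(x,y)=v(x,y)\,h(x,0)$ with respect to $y_j$. Since the last $m$ coordinates of $\varphi$ are the $y_j$ themselves, the chain rule yields
$$(\partial_{y_j}h)\circ\varphi \;=\; \partial_{y_j}v\cdot h(x,0) \;-\; \sum_i (\partial_{x_i}h)\circ\varphi\cdot\partial_{y_j}\varphi_i.$$
Using $h(x,0)=v^{-1}(h\circ\varphi)$, the right-hand side lies in the ideal generated by $h\circ\varphi$ and the $(\partial_{x_i}h)\circ\varphi$; pulling back by the ring automorphism induced by $\varphi^{-1}$ gives $\partial_{y_j}h\in(h,\partial_{x_1}h,\ldots,\partial_{x_n}h)$.

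For (a) $\Rightarrow$ (b) I would proceed by induction on $m$, the case $m=0$ being trivial. For the step, use hypothesis (a) to write $\partial_{y_1}h=a_1h+\sum_i b_{i,1}\partial_{x_i}h$ and introduce the holomorphic vector field $\xi=\partial_{y_1}-\sum_i b_{i,1}\partial_{x_i}$, which by construction satisfies $\xi h=a_1h$. Its flow is obtained by solving the holomorphic Cauchy problem
$$\partial_{y_1}X_i(x,y_1,y')=-b_{i,1}(X(x,y_1,y'),y_1,y'),\qquad X_i(x,0,y')=x_i,$$
with $y'=(y_2,\ldots,y_m)$; this has a unique holomorphic solution near the origin by the standard existence theorem for holomorphic ODEs with parameters. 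The map $\psi(x,y):=(X(x,y),y)$ is then a local biholomorphism of the required $y$-preserving form with $\psi(x,0)=(x,0)$, and integrating $\xi h=a_1h$ along the integral curves of $\xi$ gives $h\circ\psi(x,y)=V_1(x,y)\cdot h(x,0,y')$ for some unit $V_1$ with $V_1|_{y_1=0}\equiv 1$.

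Now I would restrict to $y_1=0$: setting $\tilde h(x,y'):=h(x,0,y')$ and substituting $y_1=0$ in the relations $\partial_{y_j}h=a_jh+\sum_i b_{i,j}\partial_{x_i}h$ for $j\geq 2$ shows that $\tilde h\in\C\{x,y'\}$ inherits condition (a) in the variables $y'$. The inductive hypothesis then produces a $y'$-preserving biholomorphism $\varphi'$ on $\C^{n+m-1}$ and a unit $v'$ with $\varphi'(x,0)=(x,0)$, $v'(x,0)\equiv 1$, and $\tilde h\circ\varphi'=v'(x,y')\,h(x,0)$. Extending $\varphi'$ trivially in $y_1$ to a map $\widetilde{\varphi'}$ on $\C^{n+m}$ and putting $\varphi:=\psi\circ\widetilde{\varphi'}$ gives a biholomorphism of the required form; the resulting factor $v(x,y)=V_1(\widetilde{\varphi'}(x,y))\cdot v'(x,y')$ is a unit satisfying $v(x,0)\equiv 1$.

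The principal step is the construction of $\psi$: the key point is that the identity $\xi h=a_1 h$ forces the flow to transform $h$ only by a unit. Once that is set up, the induction bookkeeping — namely verifying that $\tilde h$ still satisfies (a), that $\widetilde{\varphi'}$ preserves the $y$-coordinates, and that $\psi\circ\widetilde{\varphi'}$ is identity on $\{y=0\}$ — is routine.
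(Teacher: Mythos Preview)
Your argument is correct. The paper itself does not give a proof of this lemma but simply refers to \cite{Saito80}; the approach you outline --- integrating a logarithmic vector field of the form $\xi=\partial_{y_1}-\sum_i b_{i,1}\partial_{x_i}$ along its flow to straighten out one $y$-variable at a time --- is precisely the standard argument found there (and, in variant forms, in the other references the paper mentions, e.g.\ \cite{dJP}).
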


\begin{rem}
 This lemma is called ``triviality lemma'' because a strong\-er form of it characterizes local analytic triviality: if in $(a)$ all partial derivatives $\D_{y_i}h$ are even contained in
$(x_1, \ldots, x_n, y_1, \ldots, y_m)(\D_{x_1}h, \ldots, \D_{x_n}h)$, then one can show that $(D,p) \cong (D_0 \times \C^m,(p',0))$ where $D_0=\{ h(x,0)=0\}$ is the ``fibre'' at the origin.
\end{rem}

\begin{proof}
See for example \cite{Saito80}.
\end{proof}

The objects of our studies are divisors that are unions of Cartesian products. We call them splayed because they fill out the whole space. 

\begin{defin} \label{Def:produkt}
Let $D$ be a divisor in a complex manifold $S$, $\dim S=n$. The
divisor $D$ is called \emph{splayed} at a point $p \in S$ (or $(D,p)$ is \emph{splayed}) if one can find coordinates $(x_1, \ldots, x_n)$ at $p$ such that $(D,p)=(D_1,p) \cup (D_2,p)$ is defined by
$$h(x)=h_1(x_1, \ldots, x_k) h_2(x_{k+1}, \ldots, x_n),$$
$1\leq k \leq n-1$, where $h_i$ is the defining reduced equation of $D_i$. Note that the $h_i$ are not necessarily irreducible. The
\emph{splayed components} $D_1$ and $D_2$ are not unique. Splayed means that $D$ is the union of two products: since $h_1$ is independent of $x_{k+1}, \ldots, x_n$, the divisor $D_1$ is locally at $p$
isomorphic to a product $(D'_1,0) \times (\C^{n-k},0)$, where $(D'_1,0) \subseteq (\C^k,0)$ (and similar for $D_2$). 
\end{defin}

\begin{exa} (1) Let $(D,0)$ be the divisor in $(\C^2,0)$ defined by $h_1 h_2=x (y-x^2)$. Since $D$ has normal crossings at the origin, $D$ is splayed. \\
(2) The divisor $D=\{ (x-y^2)zw=0\}$ is splayed in $(\C^4,0)$ but its splayed components are not unique, for example $h_1=x-y^2$ and $h_2=zw$ or $h_1=(x-y^2)w$ and $h_2=z$. \\
(3) The divisor $D=\{ (xz(x+z-y^2)=0\}$ is not splayed in $(\C^3,0)$.
\end{exa}

\begin{rem}  \label{Rem:Damon}
The concept of splayed divisors was also introduced by J. Damon in the context of free divisors  under the name product union, see \cite{Damon96}. In the theory of hyperplane arrangements one
studies the analogue concept, under various names: e.g., in \cite{OrlikTerao92} splayed arrangements are called reducible and in \cite{Budur10} they are referred to as decomposable.
\end{rem}

\section{Criteria for splayed divisors} \label{Sec:criteria}

Let $(D,p)$ be a divisor in $(S,p)$. If the decomposition into irreducible components of $D$ is known, then there are effective methods to test whether $D$ has normal crossings at $p$, see 
\cite{Bodnar04}. The idea here is to find linearly independent tangent vectors, that is, to linearize the problem and thus reduce it to linear algebra. The algorithm of Bodn{\'a}r does
not work in order to test whether $(D,p)$ is splayed, since the irreducible components may be singular themselves at $p$.  \\
In Proposition \ref{Thm:Leibnizprod} it is shown that $D=D_1 \cup D_2=\{
gh=0\}$ is splayed at $p$ if and only if, up to multiplying $g, h$ with units, its
Jacobian ideal 
$J_{gh}$ of $(D,p)$ is equal to $gJ_h+hJ_g$. This property can be tested in computer algebra systems like {\sc Singular}. The second characterization is more geometrical. In Proposition \ref{Thm:splayedgeom} we show that for a splayed
divisor $\Sing D$ is the scheme-theoretic union of the intersection of $D_1$ and $D_2$, and of the singular loci $\Sing D_1$ and $\Sing D_2$.
The third criterion (Proposition \ref{Thm:splayedder}) is nearer to Bodn{\'a}r's normal crossings test. Therefore we use the notion of
logarithmic
derivations and differential forms (in the sense of K.~Saito \cite{Saito80}), which will be a main ingredient in the section about freeness of splayed divisors.

\subsection{Jacobian ideal characterization 1 - algebra} \label{Sub:leibniz}

Here a characterization of splayedness by Jacobian Ideals is shown, which can be easily checked in concrete examples.

\begin{defin}
Let $D_1=\{g=0\}$, $D_2=\{h=0\}$ and $D=\{gh=0\}$ at $p$ be defined as above. We say that $J_{gh}$ satisfies the \emph{Leibniz property} if
$$J_{gh}=gJ_h + hJ_g.$$
\end{defin}

If $D$ is splayed then it is clear that its Jacobian ideal satisfies the Leibniz property. To establish the other implication, first an intermediate ideal-theoretic characterization of splayedness is
proven (Lemma
\ref{Lem:produktallg}). 

\begin{lem}  \label{Lem:produktallg}
Let $\dim S=n$ and at a point $p=(x_1, \ldots, x_n)$ denote by $\calo=\mc{O}_{S,p}=\C\{x_1, \ldots, x_n\}$ the local ring at $p$. Let $D_1=\{g(x)=0\}$, $D_2=\{h(x)=0\}$ and
$D=\{ gh(x)=0\}$ be divisors, where we assume that $g,h \in \mc{O}$ are reduced and have no common factors. Then $(D,p)=(D_1,p) \cup D_2,p)$ is splayed if and only if
\begin{equation}   \label{Equ:splayeddurchschnitt}
(g) \cap ((gh)+J_{gh})= g((h) + J_h).
\end{equation}
\end{lem}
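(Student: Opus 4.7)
I plan to treat the two directions separately. The ``only if'' direction amounts to a direct calculation in separated coordinates, while the ``if'' direction I would reduce to two applications of the Triviality Lemma (Lemma \ref{Lem:saitotriv}).

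For ``only if,'' choose separated coordinates $g=g(x_1,\ldots,x_k)$, $h=h(x_{k+1},\ldots,x_n)$; the product rule then gives $J_{gh}=hJ_g+gJ_h$, so $g((h)+J_h)\subseteq(g)\cap((gh)+J_{gh})$ is immediate. For the reverse inclusion, any element writes as $f=\alpha gh + hp + gq$ with $p\in J_g$, $q\in J_h$; from $f\in(g)$ one gets $hp\in(g)$, and since $g,h$ use disjoint variables and are reduced they are coprime, so $(g):(h)=(g)$ forces $p\in(g)$ and hence $hp\in(gh)$, whence $f\in(gh)+gJ_h$.

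For ``if,'' I would first point out that the inclusion $(g)\cap((gh)+J_{gh})\subseteq(gh)+gJ_h$ is \emph{always} valid: given $f=\alpha gh+\sum_j\beta_j\partial_j(gh)\in(g)$, the same ``$(g):(h)=(g)$'' argument (using only coprimality of $g,h$) shows $\sum_j\beta_j\partial_jg\in(g)$, so $h\sum_j\beta_j\partial_jg\in(gh)$ and $f\in(gh)+g\sum_j\beta_j\partial_jh\subseteq(gh)+gJ_h$. The content of the hypothesis is therefore the reverse inclusion, equivalent to $gJ_h\subseteq(gh)+J_{gh}$. Unpacking $g\partial_ih\in(gh)+J_{gh}$ by the same rearrangement yields, for each $i$, coefficients $\beta_{ij}\in\mc{O}$ with $\sum_j\beta_{ij}\partial_jg\in(g)$ and $\partial_ih-\sum_j\beta_{ij}\partial_jh\in(h)$. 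Setting $\delta_i=\sum_j\beta_{ij}\partial_j$, one has $\delta_i\in\Der_{S,p}(\log D_1)$ and $\partial_i-\delta_i\in\Der_{S,p}(\log D_2)$, whence
\[
\Der_{S,p}=\Der_{S,p}(\log D_1)+\Der_{S,p}(\log D_2).
\]

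To produce splayed coordinates, I pass modulo $\mf{m}$: the images $V_1,V_2\subseteq T_pS$ of the two logarithmic derivation modules satisfy $V_1+V_2=T_pS$, so $T_pS$ admits a basis with $k$ members in $V_1$ and the remaining $n-k$ in $V_2$ (shared directions in $V_1\cap V_2$ may be absorbed into either side). Lifting by Nakayama yields an $\mc{O}$-basis $\delta_1,\ldots,\delta_n$ of $\Der_{S,p}$; after a linear coordinate change $(x_1,\ldots,x_k,y_1,\ldots,y_{n-k})$, one may assume $\delta_i\equiv\partial_{x_i}$ and $\delta_{k+j}\equiv\partial_{y_j}$ modulo $\mf{m}\Der$. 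From $\delta_{k+j}(h)\in(h)$, and the invertibility over $\mc{O}$ of the $\partial_y$-coefficient matrix (which is $I$ mod $\mf{m}$), I obtain $\partial_{y_j}h\in(h)+(\partial_{x_i}h)$; Lemma \ref{Lem:saitotriv} then produces a coordinate change in the $x$-variables alone and a unit $v$ with $h=v\cdot h_0(x)$, and we replace $h$ by $h_0$. The hypothesis being coordinate-invariant, a parallel argument for $g$---reselecting derivations $\tilde\delta_i\in\Der(\log D_1)$ with $x$-parts congruent to $I$ modulo $\mf{m}$---gives $\partial_{x_i}g\in(g)+(\partial_{y_j}g)$, and a second application of Lemma \ref{Lem:saitotriv} yields a coordinate change in the $y$-variables alone with $g=w\cdot g_0(y)$. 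Since this $y$-change fixes $x$, $h=h_0(x)$ persists, and the final coordinates exhibit $D$ as splayed.

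The principal obstacle is the iterated use of the Triviality Lemma: after the first change, one must argue from the coordinate-invariant hypothesis that a new basis of $\Der(\log D_1)$ can be chosen whose $x$-part is nondegenerate at $p$, so that the second Triviality Lemma applies. Checking that the second change (in $y$-variables only) does not destroy $h=h_0(x)$ is then automatic, and is what ties the two reductions together into splayedness.
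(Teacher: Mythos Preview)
Your argument is correct, but the ``if'' direction follows a different route from the paper's. The paper first applies the Triviality Lemma to arrange $h=h(y_{k+1},\ldots,y_n)$ together with a non-redundancy condition $\D_{y_i}h\notin(h,\D_{y_{k+1}}h,\ldots,\widehat{\D_{y_i}h},\ldots,\D_{y_n}h)$; it then unpacks the hypothesis directly---from $g\,\D_{y_i}h\in(gh)+J_{gh}$ one obtains $h\,\D_{y_i}g\in(gh)+J_{gh}$, and a coprimality-plus-Nakayama argument (the non-redundancy forces the relevant coefficients into $\mf m$) yields $\D_{y_i}g\in(g,\D_{x_1}g,\ldots,\D_{x_k}g)$---after which a second Triviality Lemma separates $g$. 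You instead first extract from the hypothesis the identity $\Der_{S,p}(\log D_1)+\Der_{S,p}(\log D_2)=\Der_{S,p}$, which is exactly the characterization appearing later as Proposition~\ref{Thm:splayedder}, and then deduce splayedness from that via the tangent-space splitting $V_1+V_2=T_pS$ and two Triviality Lemma applications. Your approach makes the link to the ``transversality of logarithmic tangent spaces'' explicit and in effect establishes the harder direction of Proposition~\ref{Thm:splayedder} in one stroke; the paper's approach is more self-contained at this stage of the exposition and avoids introducing logarithmic derivations before Section~\ref{Sub:logder}. Your handling of the ``principal obstacle''---that after the first coordinate change one may still choose $\tilde\delta_i\in\Der(\log D_1)$ with invertible $\D_x$-part, since the biholomorphism furnished by the Triviality Lemma has tangent map of the form $\bigl(\begin{smallmatrix}I&*\\0&I\end{smallmatrix}\bigr)$ at $p$---is correct and is indeed the point where care is required.
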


\begin{proof}
First note that the inclusion $(g) \cap ((gh)+J_{gh})\subseteq  g((h) + J_h)$ always holds since $g$ and $h$ have no common factors. \\
If $D$ is splayed, we can suppose without loss of generality that $D_1=\{g(x,0)=0\}$ and $D_2=\{h(0,y)=0\}$ where  $(x,y)=(x_1, \ldots, x_k,
y_{k+1},\ldots, y_n)$. Then it is easy to see, via the Leibniz property of $J_{gh}$, that $g((h)+J_h)$ is contained in $(g) \cap ((gh)+J_{gh})$.
For the other direction, suppose that  (\ref{Equ:splayeddurchschnitt}) holds. 
In this case it is easy to see that (\ref{Equ:splayeddurchschnitt}) is preserved under local isomorphisms of $(S,p)$.
One may assume without loss of generality that $h(x_1, \ldots, y_n)=h(0, \ldots,0,$ $y_{k+1}, \ldots, y_n)$ and that
\begin{equation} \label{Equ:nichtinm}
 \D_{y_i}h \not \in (h,\D_{y_{k+1}}h, \ldots, \widehat{ \D_{y_{i}}h}, \ldots,
\D_{y_n}h)
\end{equation}
for all $i \in \{ k+1, \ldots, n\}$. (If not so, suppose that e.g. $\D_{x_1}h$ is contained in $(h,\D_{x_{2}}h, \ldots, \D_{x_n}h)$. Then by the triviality lemma there exists a locally biholomorphic
map
$\varphi: (S,p) \rightarrow (S,p)$  such that 
$h \circ \varphi(x)=v(x) h(0, x_{2}, \ldots, x_n)$, with $v \in \mc{O}^*$. Then
 set $\widetilde h:= h(0,x_{2}, \ldots, x_n)$  and $\widetilde g:= g \circ \varphi$. The divisor defined by $\widetilde g \cdot \widetilde h$ is clearly isomorphic to $D$, and similarly $D_1$ and
$D_2$ are defined by $\widetilde g$ and $\widetilde h$.) \\
Now let $h(y)=h(0, \ldots, 0, y_{k+1}, \ldots, y_n)$ such that (\ref{Equ:nichtinm}) holds. From (\ref{Equ:splayeddurchschnitt}) it follows that $g (\D_{y_i}h)$ is contained in $((gh)+J_{gh})$ and by
definition $g \D_{y_i}h + h \D_{y_i}g$ is also contained in this ideal. Hence $h(\D_{y_i}g) \in ((gh)+J_{gh})$ for all $k+1 \leq i \leq n$. 
Thus we can write
$$h (\D_{y_i}g)= a_i gh + \sum_j a_{ij}(g \D_{y_j}h + h \D_{y_j}g)+\sum_j b_{ij}h\D_{x_j}g$$
where the summation indices run over the valid range. This equation can be rearranged as
\begin{equation} \label{Equ:splayed2}
 h(\D_{y_i}g -  a_i g - \sum_j a_{ij} \D_{y_j}g - \sum_j b_{ij}\D_{x_j}g)  =g \sum_j a_{ij} (\D_{y_j}h).
\end{equation}
Since $g$ and $h$ have by assumption no common factors, we conclude from (\ref{Equ:splayed2}) that $\D_{y_i}g -  a_i g - \sum_j a_{ij} \D_{y_j}g - \sum_j b_{ij}\D_{x_j}g \in (g)$ and that $\sum_j
a_{ij} (\D_{y_j}h)=c_i h$ for any $ k+1 \leq i \leq n$ and some $c_i \in \mathcal{O}$. Then (\ref{Equ:nichtinm}) implies that any $a_{ij} \in \mf{m}$: if this were not the case, that is, if 
some $a_{im} \in \mc{O}^*$, then via the equation $\sum_j a_{ij} (\D_{y_j}h)=c_i h$ one can express $\D_{y_m}h$ in terms of $h$ and the other $\D_{y_j}h$, which is a contradiction to
(\ref{Equ:nichtinm}). Hence it follows that 
$$\D_{y_i}g \in (g, \D_{x_1}g, \ldots, \D_{x_k}g)+\mf{m}(\D_{y_{k+1}}g, \ldots, \D_{y_n}g)$$
for any $k+1 \leq i \leq n$. By an application of Nakayama's lemma we obtain
$$\D_{y_i}g \in (g, \D_{x_1}g, \ldots, \D_{x_k}g) \text{ for all }i=k+1, \ldots,
n.$$
By the triviality lemma  there exists a locally biholomorphic $\psi: (S,p) \ra (S,p)$ with $\psi(x,y)=(\psi_1(x,y), \ldots, \psi_k(x,y),
y_{k+1}, \ldots, y_n)$ 
 such that $g \circ \psi$ is equal to $vg(x_1, \ldots, x_{k},0)$ for a unit $v \in \calo$. Set
$\tilde g:=v^{-1}(g \circ \psi)$ and $\tilde h:=h \circ
\psi=h=h(0, \ldots, 0, y_{k+1}, \ldots, y_n)$. By
construction $\tilde g \tilde h$ defines a splayed divisor that is isomorphic to
$D$. 
\end{proof}

\begin{proposition}  \label{Thm:Leibnizprod}
Let $(S,D)$ be a complex manifold $S$, $\dim S=n$, together with a divisor $D \subseteq S$ that is locally at a point defined by $\{gh=0\}$, where $g$ and $h$ are reduced elements
of $\mc{O}_{S,p}$ that are not necessarily irreducible but have no common
factor. Then $D=\{ g=0\} \cup \{h=0\}$ is  splayed at $p$ if $J_{gh}$ satisfies the Leibniz property
$$ J_{gh}=gJ_h + h J_g.$$
Conversely, if $D=\{ gh = 0\}$ is splayed and $g$ and $h$ are chosen in different variables, then $J_{gh}$ satisfies the Leibniz property. This means that up to possible multiplication of $g$ and $h$ with units, $J_{gh}$ satisfies the Leibniz property.
\end{proposition}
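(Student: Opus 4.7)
The plan is to split into the two implications, treating first the (easier) "splayed implies Leibniz" direction and then the substantive "Leibniz implies splayed" direction, the latter by reducing to the ideal identity of Lemma \ref{Lem:produktallg}.

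For the converse, assume $(D,p)$ is splayed with $g = g(x_1,\ldots,x_k)$ and $h = h(x_{k+1},\ldots,x_n)$ in separated variables. Then the product rule gives $\D_{x_i}(gh) = h\D_{x_i}g$ for $i\leq k$ and $\D_{x_i}(gh) = g\D_{x_i}h$ for $i>k$, so $J_{gh} \subseteq gJ_h + hJ_g$. Conversely, since $g$ is independent of $x_{k+1},\ldots,x_n$ we have $g\D_{x_j}h = \D_{x_j}(gh)$ for $j>k$, giving $gJ_h \subseteq J_{gh}$, and symmetrically $hJ_g \subseteq J_{gh}$. This settles the Leibniz property up to the choice of $g,h$ in separated variables, which is what the statement claims.

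For the forward direction, I would invoke Lemma \ref{Lem:produktallg} and reduce the problem to verifying the ideal equality
$$(g)\cap\bigl((gh)+J_{gh}\bigr) = g\bigl((h)+J_h\bigr).$$
The inclusion $\supseteq$ holds unconditionally (and is the trivial half of Lemma \ref{Lem:produktallg}). For $\subseteq$, take $a\in (g)\cap((gh)+J_{gh})$. Write $a = rg$ for some $r\in\calo$; using the Leibniz hypothesis $J_{gh}=gJ_h+hJ_g$, write also $a = sgh + g\alpha + h\beta$ with $\alpha\in J_h$ and $\beta\in J_g$. Subtracting gives
$$g\bigl(r - sh - \alpha\bigr) = h\beta.$$
Since $\calo_{S,p}\cong\C\{x_1,\ldots,x_n\}$ is a unique factorization domain and $g,h$ have no common factor, $h$ must divide $r-sh-\alpha$; say $r - sh - \alpha = ht$. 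Substituting back yields $a = rg = sgh + g\alpha + ght \in (gh) + gJ_h = g((h)+J_h)$, completing the nontrivial inclusion. Lemma \ref{Lem:produktallg} then concludes that $(D,p)$ is splayed.

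The main potential obstacle is ensuring the UFD/coprimality step is legitimate, but this is immediate since the local ring of a complex manifold is a UFD and $g,h$ are assumed to share no common factor. Otherwise the proof is essentially bookkeeping once one has Lemma \ref{Lem:produktallg} in hand, so the real work of the theorem has been offloaded to that earlier lemma; the role of this proposition is to recast its ideal-theoretic criterion into the computationally checkable Leibniz identity.
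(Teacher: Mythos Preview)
Your overall strategy---reduce to the ideal identity of Lemma \ref{Lem:produktallg}---matches the paper exactly. But you have the two inclusions reversed, and this is a genuine gap.

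The inclusion that always holds (this is the first sentence of the proof of Lemma \ref{Lem:produktallg}) is
\[
(g)\cap\bigl((gh)+J_{gh}\bigr)\ \subseteq\ g\bigl((h)+J_h\bigr),
\]
i.e.\ your ``$\subseteq$''. Your argument for it is correct, but notice that it never actually uses the Leibniz hypothesis: the decomposition $a = sgh + g\alpha + h\beta$ with $\alpha\in J_h$, $\beta\in J_g$ is available for any $a\in(gh)+J_{gh}$ simply because $\partial_i(gh)=g\partial_i h + h\partial_i g$, so $J_{gh}\subseteq gJ_h+hJ_g$ unconditionally. You then use coprimality in the UFD $\calo_{S,p}$, exactly as in the paper.

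The inclusion that does \emph{not} hold unconditionally is your ``$\supseteq$'', and that is precisely where the Leibniz property is needed. A counterexample is the paper's own example $g=x$, $h=x+y^2-z^3$: here $J_h=\calo$, so $g((h)+J_h)=(x)$, but $x\notin(gh)+J_{gh}=(2x+y^2-z^3,xy,xz^2)$. The fix is immediate once you use Leibniz in the right place: if $a=g(sh+\alpha)$ with $\alpha\in J_h$, then $g\alpha\in gJ_h\subseteq J_{gh}$ by hypothesis, so $a\in(gh)+J_{gh}$, and trivially $a\in(g)$. This is what the paper calls the ``straightforward calculation''. With that correction your proof is complete and coincides with the paper's.
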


\begin{proof}
As already remarked, if $D$ is splayed then we can choose the defining equations $g$ and $h$ in separated variables and it is clear that $J_{gh}=gJ_h+hJ_g$. Conversely, it is enough to show the equality $(g) \cap ((gh)+J_{gh})=g((h)+J_h)$ from lemma \ref{Lem:produktallg}. Since
$(g) \cap ((gh)+J_{gh})$ is always contained in $g((h)+J_h)$ it remains to check the other inclusion: take a $\beta \in g((h)+J_h)$. Then a  straightforward calculation shows that
$\beta$ is also contained in $(g) \cap ((gh)+J_{gh})$.
\end{proof}

As pointed out by P.~Aluffi, $D=\{ gh =0\}$ is splayed if and only if 
\begin{equation} \label{Equ:leibnizmod}
(gh)+J_{gh}=(gh)+gJ_h+hJ_g,
\end{equation}
regardless of multiplying $g$ and $h$ by units. Equation (\ref{Equ:leibnizmod}) can also easily be shown with lemma \ref{Lem:produktallg}. For a different approach to this characterization of splayedness, see \cite{AluffiFaber12}.

\begin{exa} Let $D$ be the divisor in $\C^3$ given at a point $p$ by $x(x+y^2-z^3)$. Then $D$ is the union of two smooth components $D_1=\{h=x+y^2-z^3=0\}$ and $H=\{g=x=0\}$. The ideal
$(gh)+J_{gh}=J_{gh}=(2x+y^2-z^3, xy,xz^2)$ is strictly contained in $(gh)+gJ_h+hJ_g=gJ_h+hJ_g=(x,y^2-z^3)$.  Thus $D$ is not a splayed divisor (see Figure \ref{fig:splayedcusp}). 
\end{exa}

\subsection{Jacobian ideal characterization 2 - geometry} \label{Sub:geometry}
Now we characterize a splayed divisor $D \subseteq S$, locally at a point $p$ given by a $gh \in \mc{O}_{S,p}$, by $\mc{O}_{\Sing D,p}=\mc{O}_{S,p}/((gh)+J_{gh})$. This characterization
 reflects the geometry of $(D,p)$, namely the two splayed components meeting transversally.

\begin{proposition}  \label{Thm:splayedgeom}
The divisor $D=D_1 \cup D_2$, defined at $p$ as above, is splayed if and only if 
\begin{equation} \label{Equ:splayedgeom}
((gh)+J_{gh})=(g,h) \cap ((g)+J_g) \cap ((h)+J_h).
\end{equation}
\end{proposition}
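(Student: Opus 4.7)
The plan is to prove the two implications separately, using the Leibniz characterization of Proposition \ref{Thm:Leibnizprod} together with its Aluffi refinement (\ref{Equ:leibnizmod}) as the link between the triple intersection and splayedness.

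For the forward direction, suppose $(D,p)$ is splayed and choose separated coordinates so that $g=g(x_1,\ldots,x_k)$ and $h=h(y_{k+1},\ldots,y_n)$. Proposition \ref{Thm:Leibnizprod} gives $J_{gh}=gJ_h+hJ_g$, so $(gh)+J_{gh}=(gh)+gJ_h+hJ_g$ and it suffices to show
\[(g,h)\cap((g)+J_g)\cap((h)+J_h)=(gh)+gJ_h+hJ_g.\]
The inclusion $\supseteq$ is immediate from $gh\in(g)\cap(h)$, $gJ_h\subseteq(g)\cap J_h$, and $hJ_g\subseteq(h)\cap J_g$. For $\subseteq$, the key algebraic ingredient is that in $\mc{O}_{S,p}$, regarded as a ring of convergent power series in the two separated blocks of variables, multiplication by $g\in\C\{x\}$ is injective on $\mc{O}_{S,p}/((h)+J_h)$ (the latter being generated by elements in the $y$-variables only), and symmetrically for $h$. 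This yields the coprime-type identities $(g)\cap((h)+J_h)=g\cdot((h)+J_h)=(gh)+gJ_h$ and $(h)\cap((g)+J_g)=(gh)+hJ_g$. Writing $(g,h)=(g)+(h)$ and applying the modular law twice, first absorbing $(g)\subseteq(g)+J_g$ and then $hJ_g\subseteq(h)+J_h$, collapses the triple intersection to $(gh)+gJ_h+hJ_g$, as required.

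For the backward direction, assume (\ref{Equ:splayedgeom}) holds. The ordinary Leibniz rule $\partial_i(gh)=g\partial_ih+h\partial_ig$ always gives $J_{gh}\subseteq gJ_h+hJ_g$. For the reverse inclusion, observe that $gJ_h\subseteq(g)$ and $gJ_h\subseteq J_h$, so $gJ_h$ is contained in each of the three ideals on the right-hand side of (\ref{Equ:splayedgeom}); the assumption therefore forces $gJ_h\subseteq(gh)+J_{gh}$, and symmetrically $hJ_g\subseteq(gh)+J_{gh}$. Combining yields the Aluffi identity
\[(gh)+J_{gh}=(gh)+gJ_h+hJ_g\]
of (\ref{Equ:leibnizmod}), from which splayedness of $D$ follows.

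The main obstacle is the triple-intersection computation in the forward direction, specifically justifying the nonzerodivisor property cleanly inside the analytic local ring with its product structure. An elementary alternative is to expand elements of $\mc{O}_{S,p}$ as convergent series in $x$ with coefficients in $\C\{y\}$ and chase equations coefficient by coefficient, which avoids any tensor-product machinery at the cost of additional bookkeeping.
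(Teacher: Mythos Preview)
Your proposal is correct, and the overall strategy is close to the paper's, but the packaging differs in both directions in ways worth noting.

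For the backward direction, the paper verifies the identity $(g)\cap((gh)+J_{gh})=g((h)+J_h)$ of Lemma~\ref{Lem:produktallg} directly from (\ref{Equ:splayedgeom}), whereas you instead extract the Aluffi form (\ref{Equ:leibnizmod}). Your route is slightly slicker: the observation that $gJ_h$ and $hJ_g$ each sit inside all three ideals on the right of (\ref{Equ:splayedgeom}) is immediate, and (\ref{Equ:leibnizmod}) then finishes. Both arguments are short.

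For the forward direction, the paper takes $\alpha=ag+bh$ in the triple intersection, observes $bh\in(g)+J_g$, and uses Grauert's division theorem in the separated variables to conclude $b\in(g)+J_g$ (and symmetrically $a\in(h)+J_h$), so that $\alpha\in g((h)+J_h)+h((g)+J_g)$. Your two applications of the modular law reduce everything to the identities $(h)\cap((g)+J_g)=h((g)+J_g)$ and $(g)\cap((h)+J_h)=g((h)+J_h)$, i.e.\ to the claim that $h$ (resp.\ $g$) is a nonzerodivisor modulo $(g)+J_g$ (resp.\ $(h)+J_h$). This is \emph{exactly} the content of the paper's Grauert step; the paper's computation is one concrete proof of your nonzerodivisor claim. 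So the hard inclusion rests on the same fact in both approaches---you have simply isolated it cleanly and then let the modular law do the bookkeeping, while the paper carries out the element-chase by hand. Your remark that coefficient-by-coefficient expansion in $\C\{x\}\llbracket y\rrbracket$-style (or flatness of $\C\{x,y\}$ over $\C\{y\}$) would also justify the nonzerodivisor property is accurate; Grauert division is the paper's chosen implementation of that same idea.
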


\begin{proof}
Recall here from lemma \ref{Lem:produktallg} that a divisor $\{gh=0\}$ is splayed if and only if (\ref{Equ:splayeddurchschnitt}) holds.
First suppose that (\ref{Equ:splayedgeom}) holds. A straightforward calculation shows (\ref{Equ:splayeddurchschnitt}). \\
For the other implication we use Grauert's division theorem (for the notation and statement see \cite[Theorem 7.1.9]{dJP}): Suppose that $D=\{gh=0\}$ is splayed. Then without loss of generality $g(x,y)=g(x)$ and
$h(x,y)=h(y)$ in $\C\{x_1, \ldots, x_k, y_{k+1},$ $
\ldots, y_n\}$. Clearly $((gh)+J_{gh})
\subseteq (g,h) \cap ((g)+J_g) \cap ((h)+J_h)$. So let $\alpha$ be an element of the right-hand side, that is, $\alpha=ag+bh=cg+\sum_{i=1}^k a_i \D_{x_i}g$ for some $a,b,c, a_i \in \mc{O}$.
Then also $\alpha
-ag=bh=(c-a)g+\sum_{i=1}^ka_i \D_{x_i}g$ is contained in $((g)+J_g)$. By Grauert's division theorem there exist some $\tilde a, \tilde a_i, r, \tilde r_i$ such that for all
$i=1, \ldots, k$ one has $c-a=\tilde a h +r$ and $a_i=\tilde a_i h +r_i$  and the leading monomial $L(h)$ does not divide any monomial of the unique remainders $r, r_i$. Then write
$$(b-\tilde a g - \sum_{i=1}^k \tilde a_i (\D_{x_i}g))h=rg +\sum_{i=1}^k r_i (\D_{x_i}g).$$
Since $h$ only depends on $y$ and $g$ only on $x$, it follows that $L(h)$ does also not divide any of the monomials of the right-hand side of the equation. But this is only possible 
if $(b-\tilde a g - \sum_{i=1}^k \tilde a_i (\D_{x_i}g))h=0$. 
It follows that $b$ is contained in $((g)+J_g)$. Interchanging the roles of $g$ and $h$
yields that $a \in ((h)+J_h)$ and thus $\alpha \in (gh, gJ_h +hJ_g)$. The Leibniz property of $J_{gh}$ implies that $\alpha \in ((gh)+ J_{gh})$.
\end{proof}

\begin{rem}
Here it can be seen that for splayed divisors the Jacobian ideal is the intersection of the two ideals defining the singular loci of the splayed components $D_1$ and $D_2$ plus the intersection of
$D_1$ and
$D_2$. For two smooth divisors $D_1$ and $D_2$ this means that $D=D_1 \cup D_2$ is a splayed divisor if and only if the scheme-theoretical intersection $D_1\cap D_2$ is smooth, which
is in turn equivalent to saying that $D_1$ and $D_2$ intersect transversally, (cf.~\cite{LiLi,FaberHauser10}). Note that for varieties of arbitrary codimension the condition that the
scheme-theoretical intersection is smooth, leads to the formulation of \emph{clean intersection}, see \cite{LiLi}.
\end{rem}

\begin{exa}   \label{Ex:splayedcusp}
Let $D \subseteq \C^3$ be given at the origin by $gh=x(x+y^2+z^3)=0$. Then $D$ is not splayed at the origin. Here the intersection of the two components is given by the ideal $(g,h)=(x,y^2+z^3)$. Also
consider the divisor $D' \subseteq \C^3$ that is given by $g'h'=x(y^2+z^3)$. Clearly $D'$ is splayed at the origin and the intersection of the two components is given by the ideal
$(g',h')=(x,y^2+z^3)$, see Figure \ref{fig:splayedcusp}.
\end{exa}

\subsection{Logarithmic derivation characterization - tangency} \label{Sub:logder}
As already mentioned in the introduction,  two submanifolds of a manifold intersect transversally at a point if their respective tangent spaces at that point together generate the tangent space of the
ambient manifold. In this section we will translate this definition for splayed divisors with the help of logarithmic derivations (Proposition \ref{Thm:splayedder}). Logarithmic derivations
were introduced by K.~Saito in \cite{Saito80} and are a useful tool in studying tangent behaviour for singular varieties. They lead to the definition of free divisors, which will be studied in more
detail in section \ref{Sub:freediv}.

\subsubsection{A very brief recap of Saito's theory of free divisors (\cite{Saito80})}
Let $D$ be a divisor in $S$ defined at $p$ by $D=\{h=0\}$. A \emph{logarithmic vector field} (or \emph{logarithmic derivation}) (along $D$) is a holomorphic vector field  on $S$, that is, an element
of $\Der_{S}$, satisfying
one of the two equivalent
conditions: \\
(i) For any smooth point $p$ of $D$, the vector $\delta(p)$ of $p$ is tangent to $D$, \\
(ii) For any point $p$, where $(D,p)$ is given by $h=0$, the germ $\delta(h)$ is contained in the ideal
$(h)$ of  $\mc{O}_{S,p}$. The module of germs of logarithmic derivations (along $D$) at $p$ is denoted by 
\[  \Der_{S,p}(\log D)=\{  \delta: \delta \in \Der_{S,p} \text{ such that }\delta h \in (h)  \}, \]
The $\Der_{S,p}(\log D)$ are the stalks at points $p$ of the sheaf $\Der_S(\log D)$ of $\mc{O}_{S}$-modules. 
Similarly we define logarithmic differential forms: a meromorphic $q$-form $\omega$ is logarithmic (along
$D$) at a point $p$ if $\omega h$ and $hd\omega$ are holomorphic in an
open neighbourhood around $p$. We write
\[\Omega^q_{S,p}(\log D)= \{ \omega: \omega \text{ germ of a logarithmic $q$-form at $p$} \},\] 
One can show that $\Der_{S,p}(\log D)$ and $\Omega^1_{S,p}(\log D)$ are reflexive $\mc{O}_{S,p}$-modules dual to each other (see \cite{Saito80}). One says that $(D,p)$ is \emph{free} or that $D$ is
free at $p$ if
$\Der_{S,p}(\log D)$ resp. $\Omega_{S,p}^1((\log D)$ is a free $\calo_{S,p}$-module. In section \ref{Sub:freediv} free divisors are discussed in more detail and the following theorem is used, which
makes it possible to test whether $D$ is free (cf.~\cite[Thm.1.8]{Saito80}): 

\begin{thm}[Saito's criterion] \label{Thm:Saito}
Let $(S,D)$, $p$ and $h$ be as defined in the introduction. The $\mc{O}_{S,p}$-module $\Der_{S,p}(\log D)$ is free if and only if there exist $n$ vector fields $\delta_i=\sum_{j=1}^n
a_{ij}(x) \D_{x_j}$
in $\Der_{S,p}(\log D)$, $i=1, \ldots, n$, such that $\det(a_{ij}(x))$ is equal to $h$ up to an invertible factor. Moreover, then the vector fields $\delta_1, \ldots, \delta_n$ form a basis for
$\Der_{S,p}(\log
D)$. \\
\end{thm}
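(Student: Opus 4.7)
The plan is to reduce both directions to one divisibility lemma: for any $n$ derivations $\delta_i = \sum_j a_{ij}\D_{x_j}$ in $\Der_{S,p}(\log D)$, one has $h \mid \det(a_{ij})$ in $\mc{O}_{S,p}$. First I prove this. Writing $\delta_i(h) = c_i h$ in matrix form gives $A \cdot \nabla h = h \cdot c$, where $A = (a_{ij})$, $\nabla h = (\D_{x_1}h, \ldots, \D_{x_n}h)^T$, and $c = (c_1, \ldots, c_n)^T$. Multiplying by the adjugate of $A$ yields $\det(A) \cdot \D_{x_j}h \in (h)$ for every $j$, so $\det(A)$ annihilates $\widetilde{J_h} \subset \mc{O}_{D,p}$. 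Because $h$ is reduced, $\Sing D$ has positive codimension in $D$, so $\widetilde{J_h}$ contains a non-zero-divisor of $\mc{O}_{D,p}$, forcing its annihilator to vanish and hence $h \mid \det(A)$.

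For the ``if'' direction, suppose $\delta_1, \ldots, \delta_n$ satisfy $\det(A) = u h$ with $u$ a unit. They are linearly independent over the fraction field of $\mc{O}_{S,p}$. Given any $\delta \in \Der_{S,p}(\log D)$, solving $\delta = \sum_i f_i \delta_i$ by Cramer's rule yields $f_i = \Delta_i/\det(A)$, where $\Delta_i$ is, up to sign, the coefficient-matrix determinant of the $n$-tuple obtained by replacing $\delta_i$ by $\delta$. This new $n$-tuple again lies in $\Der_{S,p}(\log D)$, so the lemma applied to it gives $h \mid \Delta_i$ and hence $f_i \in \mc{O}_{S,p}$. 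Thus $\delta_1, \ldots, \delta_n$ form an $\mc{O}_{S,p}$-basis of $\Der_{S,p}(\log D)$, which is therefore free of rank $n$.

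For the ``only if'' direction, assume $\Der_{S,p}(\log D)$ is free with basis $\delta_1, \ldots, \delta_n$; the lemma gives $\det(A) = u h$ for some $u \in \mc{O}_{S,p}$, and the task is to show $u$ is a unit. At any point $q$ where $D$ is smooth, one may choose coordinates so that $h$ is a coordinate function, producing an obvious basis of $\Der_{S,q}(\log D)$ whose coefficient-matrix determinant equals $h$ up to a unit. Since any two bases of a free module differ by an element of $\mathrm{GL}_n(\mc{O}_{S,q})$, the coefficient matrix of $\delta_1, \ldots, \delta_n$ at $q$ also has determinant equal to $h$ times a unit. Thus $u$ is a unit at every smooth point of $D$ near $p$ (and trivially off $D$), so $V(u) \subseteq \Sing D$. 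But $\Sing D$ has codimension at least two in $S$, whereas a non-unit $u \in \mc{O}_{S,p}$ would cut out a hypersurface $V(u)$ of codimension one; contradiction. Hence $u$ is a unit.

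The step I expect to be the main obstacle is the ``only if'' direction, specifically the passage from ``$u$ is a unit generically on $D$'' to ``$u$ is a unit in $\mc{O}_{S,p}$'', which forces one to invoke the codimension bound $\mathrm{codim}_S \Sing D \geq 2$. This bound rests once more on $h$ being reduced; the remainder of the argument is essentially Cramer's rule together with the divisibility lemma.
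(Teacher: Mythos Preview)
The paper does not supply its own proof of this theorem: it is stated as Saito's criterion and attributed to \cite[Thm.~1.8]{Saito80}, so there is no in-paper argument to compare against. Your proof is correct and is essentially the classical one from Saito's original paper: the divisibility lemma via the adjugate trick, Cramer's rule for the ``if'' direction, and the codimension-two argument for the ``only if'' direction. One small point you leave implicit is that a free $\Der_{S,p}(\log D)$ must have rank exactly $n$; this follows since $h\cdot\Der_{S,p}\subseteq \Der_{S,p}(\log D)\subseteq \Der_{S,p}$ forces the generic rank to be $n$, but it is worth saying so before writing ``basis $\delta_1,\ldots,\delta_n$''.
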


\begin{rem}
The module $\Der_{S,p}(\log D)$ naturally carries the structure of a Lie algebra. This was considered under the name \emph{tangent algebra} in \cite{HM93}, where also many properties of
algebraic and analytic varieties were characterized in terms of tangent algebras. The Lie algebra structure of logarithmic derivations is also considered in \cite{GrangerSchulze09,Sekiguchi08}.
\end{rem}

\subsubsection{Logarithmic derivations and splayed divisors} 

\begin{proposition}  \label{Thm:splayedder}
Let $(D,p)=(D_1, p) \cup (D_2,p) \subseteq (S,p)\cong (\C^n,0)$ be a divisor such that $(D_1,p)=\{ g=0\}$ and $(D_2,p)=\{h=0\}$ for some $g,h$ not necessarily irreducible but with no common factor.
Then $D$ is
splayed at $p$ if and only if 
\begin{equation}\Der_{S,p}(\log D_1)+ \Der_{S,p}(\log D_2)=\Der_{S,p}.
 \label{Equ:splayednew}
\end{equation}
\end{proposition}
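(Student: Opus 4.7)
The plan is to handle the two directions asymmetrically: the forward direction is essentially immediate from the separated-variable form of a splayed divisor, while the reverse direction reduces to verifying the (modified) Leibniz property of Proposition \ref{Thm:Leibnizprod} (equation (\ref{Equ:leibnizmod})), which by Aluffi's remark characterizes splayedness up to units.

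For the forward direction, I would choose coordinates $(x_1,\ldots,x_k,y_{k+1},\ldots,y_n)$ so that $g=g(x)$ and $h=h(y)$. Then $\partial_{x_i}(h)=0\in(h)$ for $i\le k$, so $\partial_{x_i}\in\Der_{S,p}(\log D_2)$, and symmetrically $\partial_{y_j}\in\Der_{S,p}(\log D_1)$ for $j>k$. Since the $\partial_{x_i}$ and $\partial_{y_j}$ generate $\Der_{S,p}$, we get (\ref{Equ:splayednew}).

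For the reverse direction, assume $\Der_{S,p}(\log D_1)+\Der_{S,p}(\log D_2)=\Der_{S,p}$. For each coordinate direction, decompose
\[ \partial_{x_i}=\eta_i+\zeta_i,\qquad \eta_i\in\Der_{S,p}(\log D_1),\ \zeta_i\in\Der_{S,p}(\log D_2). \]
The key computation is then a Leibniz rearrangement:
\[ h\,\partial_{x_i}(g)=h\eta_i(g)+h\zeta_i(g)=h\eta_i(g)+\zeta_i(gh)-g\zeta_i(h). \]
Since $\eta_i(g)\in(g)$ and $\zeta_i(h)\in(h)$, the outer two terms lie in $(gh)$; and $\zeta_i(gh)$, being a linear combination of partial derivatives of $gh$, lies in $J_{gh}$. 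Hence $h\,\partial_{x_i}(g)\in(gh)+J_{gh}$ for every $i$, i.e. $hJ_g\subseteq(gh)+J_{gh}$. The symmetric argument using the decomposition of $\partial_{x_i}$ again (exchanging the roles of $g$ and $h$ in the identity $g\,\partial_{x_i}(h)=\eta_i(gh)-h\eta_i(g)+g\zeta_i(h)$) gives $gJ_h\subseteq(gh)+J_{gh}$. Combined with the always-true Leibniz inclusion $J_{gh}\subseteq hJ_g+gJ_h$, this yields
\[ (gh)+J_{gh}=(gh)+gJ_h+hJ_g, \]
which is exactly (\ref{Equ:leibnizmod}). By the remark following Proposition \ref{Thm:Leibnizprod}, $D$ is splayed at $p$.

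The main obstacle I anticipate is the reverse direction, specifically making the bookkeeping clean: one has to resist the temptation to aim directly for the literal Leibniz property $J_{gh}=gJ_h+hJ_g$ (which requires choosing $g$ and $h$ in separated variables, i.e.\ already knowing splayedness), and instead aim for the coordinate-free version (\ref{Equ:leibnizmod}). Once that target is identified, the computation is a two-line application of the product rule together with the defining conditions $\eta_i(g)\in(g)$ and $\zeta_i(h)\in(h)$.
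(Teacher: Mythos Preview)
Your proof is correct. The forward direction is the same as the paper's. For the reverse direction you take a genuinely different route: the paper first normalizes coordinates so that $g=g(x_1,\ldots,x_k)$ (via the triviality lemma, as in Lemma~\ref{Lem:produktallg}), observes that $\Der_{S,p}(\log D_1)$ is then generated by derivations with coefficients in $\mf{m}$ together with $\partial_{y_{k+1}},\ldots,\partial_{y_n}$, and uses the decomposition of $\partial_{x_i}$ to deduce $(\partial_{x_1}h,\ldots,\partial_{x_k}h)\subseteq(h,\partial_{y_{k+1}}h,\ldots,\partial_{y_n}h)$, finishing with a second application of the triviality lemma. You instead bypass all coordinate normalization and go straight for the ideal identity~(\ref{Equ:leibnizmod}) via the product-rule rearrangement $h\zeta_i(g)=\zeta_i(gh)-g\zeta_i(h)$, then invoke Aluffi's remark. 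Your argument is shorter and coordinate-free, and makes the logical dependence on the Jacobian characterization explicit; the paper's argument is more self-contained (it does not cite~(\ref{Equ:leibnizmod})) but reproduces in situ the Nakayama/triviality reasoning already carried out in Lemma~\ref{Lem:produktallg}. Since~(\ref{Equ:leibnizmod}) is itself established in the paper via that lemma, the two proofs ultimately rest on the same foundation.
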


\begin{proof}
As above, we denote by $(x,y):=(x_1, \ldots, x_k, y_{k+1}, \ldots, y_n)$ the coordinates at $p$ and by $\calo=\C\{x,y\}$ the corresponding local ring. If $D$ is splayed then without loss of generality one has $g=g(x,0)$
and $\Der_{S,p}(\log D_1)$ is generated by some $\delta_1, \ldots, \delta_m$, $m \geq k$, $\D_{y_{k+1}},\ldots, \D_{y_n}$. Similarly $h=h(0,y)$ and $\Der_{S,p}(\log D_2)$ is generated by some
$\D_{x_1}, \ldots, \D_{x_k}, \varepsilon_{1}, \ldots, \varepsilon_l$ for some $l \geq n-k$. Then clearly we have
$$ \Der_{S,p}(\log D_1)+\Der_{S,p}(\log D_2)  ={}_\calo\!\langle \D_{x_1}, \ldots, \D_{x_k}, \D_{y_{k+1}}, \ldots, \D_{y_n}\rangle = \Der_{S,p}.$$
For the other implication we may assume that $g(x,y)=g(x,0)$ and 
$$\D_{x_i}g \not \in (\D_{x_1}g, \ldots, \widehat{\D_{x_i}g}, \ldots, \D_{x_k}g)$$
 for all $i \leq k$ by a similar argument as in lemma \ref{Lem:produktallg}. 
Then it is clear that $\Der_{S,p}(\log D_1)$ can be generated by some $\delta_1, \ldots, \delta_m$ and $\D_{y_{k+1}}, \ldots, \D_{y_n}$, where $m \geq k$ and the coefficients of the $\delta_i$ only
depend on $x_1, \ldots, x_k$ and lie in the maximal ideal $\mf{m} \subseteq \calo$. \\
Using $\D_{x_i} \in {}_\calo \langle \varepsilon_1, \ldots, \varepsilon_l, \delta_1, \ldots, \delta_m, \D_{y_{k+1}}, \ldots, \D_{y_n} \rangle$ and computing $\D_{x_i}h$ we obtain (similar to the proof
of lemma \ref{Lem:produktallg}) that
$$(\D_{x_1}h, \ldots, \D_{x_k}h) \subseteq (h, \D_{y_{k+1}}h, \ldots, \D_{y_n}h).$$
An application of the triviality lemma shows that $D_2$ can be chosen independently of the variables $x_1, \ldots, x_k$, and thus that $D$ is splayed.
\end{proof}

\begin{question}
In view of Proposition \ref{Thm:splayedder} and the duality of $\Der_{S,p}(\log D)$ and $\Omega^1_{S,p}(\log D)$ we ask: can one express the splayedness of a divisor $(D_1 \cup D_2,p)$ in
terms of $\Omega^1_{S,p}(\log D_1)$, $\Omega^1_{S,p}(\log D_2)$ and $\Omega_{S,p}^1$?
\end{question}

\section{Applications}

\subsection{Free divisors and normal crossings} \label{Sub:freediv}

In some sense free divisors are a generalization of normal crossing divisors: their modules of tangent vector fields are free. However, irreducible (non-smooth) free divisors are highly singular,
that is, they are non-normal and their singular loci are Cohen--Macaulay of codimension 1 in the divisor, see \cite{Aleksandrov90,Simis06}. Here we consider the relationship between free and splayed
divisors and in particular
between splayed and normal crossing divisors. First we show that a splayed divisor is free if and only if its splayed components are free (Proposition \ref{Prop:produktfreiallg}). Then we turn to normal
crossing divisors, in particular to the problem of characterizing normal crossing divisors by their Jacobian ideal. This problem was stated by H.~Hauser and considered in \cite{Faber11}, where a
general answer was found. Here we use the ideal theoretic characterizations of splayed divisors in order to show that a divisor consisting of smooth irreducible components has normal crossings at a
point $p$ if and only if it is free at $p$ and its Jacobian ideal is radical (Corollary \ref{Cor:smoothcomp}). \\

First let us describe the structure of $\Der_{S,p}(\log D)$ for splayed divisors in a different way: let $S,T$ be complex manifolds of dimensions $n,m$ and suppose that $(S\times
T,0)\cong (\C^{n+m},0)$, with complex coordinates $(x,y)=(x_1, \ldots, x_n,$ $y_1,
\ldots, y_m)$ at the origin. Let $(D^x_1,0)$ be a divisor in $(S,0)$,
which is defined by a reduced $g'(x) \in \mc{O}_{S,0}\cong \C\{x_1, \ldots,
x_n\}$ and which has a logarithmic derivation module over $\C\{x\}$ denoted by
$\Der_{S,0}(\log D^x_1)$. Then we may consider the cylinder over $D^x_1$ in the
$T$-direction in $(S\times T,0)$, namely the hypersurface $D_1$ defined by $g(x,y)=g(x,0):=g'(x) \in \C\{x,y\}$. It is easy to see that  
$$\Der_{S\times T,0}(\log D_1)=(\Der_{S,0}(\log
D^x_1)\otimes_{\C\{x\}}\C\{x,y\})\oplus (\Der_{T,0}
\otimes_{\C\{y\}}\C\{x,y\}).$$ 

Similarly define $D_2^y$ and $D_2$ with equations $h'(y)=h(x,y)$ and also $\Der_{S\times T,0}(\log D_2)$.
Thus we define
the (splayed) divisor $D=D_1 \cup D_2$ in $S \times T$ that is given at $0$ by
the equation $gh=0$. Since $g$ and $h$ have separated variables, there is a natural
splitting of $\Der_{S \times T,0}(\log D)$:
{\small \begin{equation}  \label{Equ:splayedvectorfields}
 \Der_{S\times T,0}(\log D)\!=(\Der_{S,0}(\log D^x_1)\otimes_{\C\{x\}}\C\{x,y\})
\oplus (\Der_{T,0}(\log D^y_2)\otimes_{\C\{y\}}\C\{x,y\}).
\end{equation}
} \normalsize

\begin{proposition} \label{Prop:produktfreiallg} 
 Let $(D,p)=(D_1,p) \cup (D_2,p)$ be a splayed divisor in $S \cong \C^{n+m}$ defined as above. 
The divisor $D=\{g(x)h(y)=0\}$ is
free if and only if $D_1=\{g(x)=0\}$ and $D_2=\{h(y)=0\}$ are both free.
\end{proposition}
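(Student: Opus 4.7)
I would exploit the direct sum decomposition (\ref{Equ:splayedvectorfields}) together with Saito's criterion (Theorem \ref{Thm:Saito}). Both implications ultimately reduce to finding an $n\times n$ (resp.\ $m\times m$) Saito matrix for $D_1^x$ (resp.\ $D_2^y$) whose determinant is a unit multiple of $g(x)$ (resp.\ $h(y)$), or producing one.

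For $(\Leftarrow)$, suppose $D_1^x$ and $D_2^y$ are both free. Pick Saito bases with matrices $A(x)$ of size $n\times n$ and $B(y)$ of size $m\times m$, with determinants $u_1(x)g(x)$ and $u_2(y)h(y)$. The $(n+m)\times(n+m)$ block-diagonal matrix $\mathrm{diag}(A,B)$ has rows which, via the embeddings in (\ref{Equ:splayedvectorfields}), are logarithmic along $D_1$ or $D_2$ and hence along $D=D_1\cup D_2$; its determinant is $u_1 u_2\cdot gh$, a unit times $gh$. Saito's criterion yields that $D$ is free.

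For $(\Rightarrow)$, assume $D$ is free and pick a basis $\eta_1,\ldots,\eta_{n+m}$ of $\Der_{S\times T,0}(\log D)$. By (\ref{Equ:splayedvectorfields}) each $\eta_i$ splits uniquely as $\alpha_i+\beta_i$ with $\alpha_i=\sum_j a_{ij}(x,y)\D_{x_j}\in\Der_{S,0}(\log D_1^x)\otimes\C\{x,y\}$ and $\beta_i=\sum_k b_{ik}(x,y)\D_{y_k}$ in the analogous $y$-summand. Let $A=(a_{ij})$ and $B=(b_{ik})$ be the corresponding blocks of sizes $(n+m)\times n$ and $(n+m)\times m$. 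Saito's criterion gives $\det[A\mid B]=u\cdot g(x)h(y)$ for some unit $u\in\C\{x,y\}^*$, and Laplace expansion along the first $n$ columns produces
\[
u\cdot g(x)h(y)=\sum_{|I|=n}\varepsilon_I\,\det(A_I)\det(B_{I^c}).
\]

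The main obstacle is to extract from this identity an honest Saito basis for $D_1^x$ (and symmetrically for $D_2^y$). Here one uses the standard fact that for any $n$ derivations in $\Der_{S\times T,0}(\log D_1)$ the determinant of the matrix of $\D_{x_j}$-coefficients lies in $(g)$; this is verified by reducing to the local normal form $\Der(\log D_1)=\langle x_1\D_{x_1},\D_{x_2},\ldots\rangle$ at a smooth point of $D_1$ and extending holomorphically across the codimension-$\geq 2$ singular locus. The analogous statement holds for $h$. One may therefore write $\det(A_I)=p_I(x,y)g(x)$ and $\det(B_{I^c})=q_{I^c}(x,y)h(y)$, which upon cancellation becomes $u=\sum_I\varepsilon_I p_I q_{I^c}$. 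Since $u$ is a unit, evaluation at the origin forces some $I_0$ with $p_{I_0}(0)q_{I_0^c}(0)\neq 0$, so both $p_{I_0}$ and $q_{I_0^c}$ are units in $\C\{x,y\}$. Restricting the derivations $\{\alpha_i:i\in I_0\}$ to $\{y=0\}$ then yields $n$ logarithmic derivations in $\Der_{S,0}(\log D_1^x)$ whose Saito matrix has determinant $p_{I_0}(x,0)g(x)$, a unit of $\C\{x\}$ times $g(x)$. By Saito's criterion $D_1^x$ is free, and by the symmetric argument so is $D_2^y$.
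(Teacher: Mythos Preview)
Your $(\Leftarrow)$ direction coincides with the paper's: both build a block-diagonal Saito matrix from bases of $\Der_{S,0}(\log D_1^x)$ and $\Der_{T,0}(\log D_2^y)$ and invoke Saito's criterion.

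Your $(\Rightarrow)$ direction is correct but genuinely different from the paper's. The paper argues abstractly: by the decomposition (\ref{Equ:splayedvectorfields}), $\Der_{S,0}(\log D_1^x)\otimes_{\C\{x\}}\C\{x,y\}$ is a direct summand of the free module $\Der_{S\times T,0}(\log D)$, hence projective, hence free over the local ring $\C\{x,y\}$; freeness then descends to $\Der_{S,0}(\log D_1^x)$ over $\C\{x\}$. Your argument is instead constructive: you take an actual Saito basis of $\Der_{S\times T,0}(\log D)$, Laplace-expand its determinant along the $x$-block, use the standard divisibility $\det(A_I)\in(g)$ to cancel $gh$, and extract from the resulting unit identity a specific $n$-subset $I_0$ whose restriction to $y=0$ furnishes a Saito basis for $D_1^x$. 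The paper's route is shorter and more conceptual; yours has the advantage of being explicit---it actually \emph{produces} the free basis for the component from that of $D$, and it avoids the (easy but unstated in the paper) descent step from $\C\{x,y\}$ back to $\C\{x\}$.
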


\begin{proof}
If both $D_1$ and $D_2$ are free then there exist bases of
$\Der_{S\times T,p}(\log D_1)$ and $\Der_{S\times T,p}(\log D_2)$ of the form
$$\delta_1=\sum_{i=1}^n a_{1i} \D_{x_i}, \ldots, \delta_n=\sum_{i=1}^n a_{ni}
\D_{x_i}, \delta_{n+1}=\D_{y_1}, \ldots, \delta_{n+m}=\D_{y_m}$$ 
and
$$\varepsilon_1=\D_{x_1}, \ldots, \varepsilon_n=\D_{x_n}, \varepsilon_{n+1}=\sum_{i=1}^mb_{n+1,i}\D_ {y_i}, \ldots,
\varepsilon_{n+m}=\sum_{i=1}^mb_{n+m,i}\D_ {y_i}.$$ 
It is easy to see that any $\delta_i$ for $1 \leq i \leq n$ and any $\varepsilon_j$ for $n+1 \leq j \leq n+m$ is also an element of $\Der_{S\times T,p}(\log
D)$. 
By Saito's criterion (Thm.~\ref{Thm:Saito}) it follows that $\delta_1, \ldots, \delta_n,
\varepsilon_{n+1},
\ldots, \varepsilon_{n+m}$ form a basis of $\Der_{S \times T,p}(\log D)$. Conversely, suppose that $\Der_{S\times T,p}(\log D)$ is free. 
Since $\Der_{S\times T,p}(\log D)$ is free, it follows by (\ref{Equ:splayedvectorfields})
that $\Der_{S,0}(\log D^x_1)\otimes_{\C\{x\}}\C\{x,y\}$ and 
$\Der_{T,0}(\log D^y_2)\otimes_{\C\{y\}}\C\{x,y\}$ are projective
$\mc{O}_{S\times T,p}$-mod\-ules. Since the notion of projective and free module over regular
local rings coincide, 
these two
modules are even free.
\end{proof}

\subsubsection{Radical Jacobian ideal - normal crossings}

In \cite{Faber11} the following problem (proposed by H.~Hauser) was considered: suppose that $D \subseteq S$, $\dim S=n$, is a divisor that is locally at a point $p \in S$ given by $\{h=0\}$. Can we
determine if $D$ has normal crossings at $p$ by the knowledge of its Jacobian ideal $J_h$? \\
It was shown that $D$ has normal crossings in case $D$ is free, the Jacobian ideal is radical and the normalization of $D$ is smooth (see
\cite[Thm.~2.4]{Faber12}).   \\

Here we use splayed divisors to show this theorem for the case where $(D,p)=\bigcup_{i=1}^m(D_i,p)$ is a union of \emph{smooth} irreducible components $D_i$.  Note that in this special case the
hypothesis on the normalization is not needed. Moreover, the problem can be reduced to the case where $(D,p)$ is irreducible (Proposition \ref{Prop:produktfreireduktion}).\\

\begin{rem}  \label{Rmk:BrianconSkoda}
One can show, using the theorem of Brian{\c{c}}on--Skoda that $J_h$ radical implies that $h$ is already contained in $J_h$, see  \cite{Faber12}.
\end{rem}

\begin{lem} \label{Lem:produktfreiradikal} 
 Let $D=\{gh=0\}$ be splayed at a point $p=(x_1, \ldots, x_k,$ $y_{k+1}, \ldots, y_n)$ in $S \cong \C^{n}$ and denote by $D_1=\{g=g(x,0)=0\}$ and $D_2=\{h=h(0,y)=0\}$ its splayed components.  
The
Jacobian ideal of $D$, denoted by $J_{gh}$ is radical if and only if both $J_h$ and
$J_g$ are also radical. 
\end{lem}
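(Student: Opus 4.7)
The plan is to prove the two implications separately.

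For the forward direction, assume both $J_g$ and $J_h$ are radical. By Brian{\c c}on--Skoda (Remark \ref{Rmk:BrianconSkoda}) this forces $g \in J_g$ and $h \in J_h$. Then $gh \in hJ_g \subseteq J_{gh}$, so $(gh) + J_{gh} = J_{gh}$, and Proposition \ref{Thm:splayedgeom} identifies
\[
J_{gh} \;=\; (g,h) \cap ((g)+J_g) \cap ((h)+J_h) \;=\; (g,h) \cap J_g \cap J_h,
\]
using $g\in J_g$ and $h\in J_h$ in the last step. With $g$ and $h$ reduced and in separated variables, $\mc{O}/(g,h) \cong (\mc{O}_x/(g)) \hat\otimes_\C (\mc{O}_y/(h))$ is the analytic tensor product over the perfect field $\C$ of two reduced $\C$-algebras, and is therefore reduced; so $(g,h)$ is radical. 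The same tensor-product reasoning shows that extending $J_g$ and $J_h$ to $\mc{O}$ preserves radicality. An intersection of radical ideals is radical, so $J_{gh}$ is radical.

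For the reverse direction, by symmetry it suffices to show $J_g$ is radical, and I argue by contrapositive. Suppose $J_g$ is not radical; I aim to produce an element of $\sqrt{J_{gh}} \setminus J_{gh}$. First I would select $a \in \mc{O}_x$ with $a^N \in J_g$ for some $N \geq 1$ but $a \notin (g) + J_g$. When $g \in J_g$, so that $(g)+J_g = J_g$, any $a \in \sqrt{J_g} \setminus J_g$ works; when $g \notin J_g$ the existence of such $a$ reduces to the sub-claim that $\sqrt{J_g} \not\subseteq (g) + J_g$, which in the isolated-singularity case follows from the Mather--Yau theorem (the Tjurina number is at least $2$ away from the smooth-quadric case), and which can be argued in general by analyzing the primary decomposition of $J_g$.

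Given such an $a$, set $\xi = ah$. Since $a^N h \in hJ_g \subseteq J_{gh}$, we have $\xi^N = h^{N-1}(a^Nh) \in J_{gh}$, so $\xi \in \sqrt{J_{gh}}$. Suppose for contradiction $\xi \in J_{gh} = gJ_h + hJ_g$ and write $\xi = g\alpha + h\beta$ with $\alpha \in J_h\mc{O}$ and $\beta \in J_g\mc{O}$. Reducing modulo $((g)+J_g)\mc{O}$ annihilates both $g$ and $\beta$, so $\xi \equiv 0$ in the quotient $R := \mc{O}/((g)+J_g)\mc{O} \cong (\mc{O}_x/((g)+J_g)) \hat\otimes_\C \mc{O}_y$. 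The residue of $\xi = ah$ in $R$ is $\bar a \cdot h$; because $\mc{O}_y$ is a domain and $R$ is formed by flat base change over the field $\C$, multiplication by the nonzero element $h$ is injective on $R$, so $\bar a \cdot h = 0$ forces $\bar a = 0$, i.e.\ $a \in (g)+J_g$, contradicting the choice of $a$. Therefore $\xi \in \sqrt{J_{gh}} \setminus J_{gh}$ and $J_{gh}$ is not radical.

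The hardest part is the sub-claim guaranteeing the existence of a suitable $a$ when $g \notin J_g$; once this is in hand the rest of the proof is a clean application of Proposition \ref{Thm:splayedgeom}, Brian{\c c}on--Skoda, and the flatness of the analytic tensor product over $\C$.
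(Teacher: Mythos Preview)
Your forward direction is essentially the paper's: both arrive at $J_{gh}=(g,h)\cap J_g\cap J_h$ and conclude that an intersection of radical ideals is radical. You supply more justification (reduced analytic tensor product over $\C$) for why $(g,h)$ and the extensions of $J_g,J_h$ stay radical, which the paper leaves implicit.

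For the converse you take a genuinely different route. The paper's argument is short: since $gh\in J_{gh}$ by Brian{\c{c}}on--Skoda, localizing $\C\{x,y\}$ at $g$ collapses things to $(J_{gh})_g=((h)+J_h)_g$, which is radical because localization preserves radicality; then, using that $g\in\C\{x\}$ is a nonzerodivisor modulo any ideal generated in $\C\{y\}$, one checks directly that $(h)+J_h$ itself is radical. Your contrapositive construction of $\xi=ah$ and the verification that $\xi\notin J_{gh}$ via reduction modulo $((g)+J_g)\mc{O}$ are correct once the element $a$ is in hand. The sub-claim you flag as hardest---producing $a\in\sqrt{J_g}\setminus((g)+J_g)$ from the assumption that $J_g$ is not radical---is equivalent to the implication ``$(g)+J_g$ radical $\Rightarrow$ $J_g$ radical'', and your ``primary decomposition in general'' gesture does not establish it. Note, however, that the paper's localization argument, read literally, also only proves that $(h)+J_h$ and $(g)+J_g$ are radical; the final passage to $J_h$ and $J_g$ is not spelled out there either. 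So the gap you have isolated is genuine, but it is not a defect of your approach relative to the paper's---the localization route simply reaches the intermediate conclusion $(g)+J_g=\sqrt{(g)+J_g}$ more directly and with less machinery.
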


\begin{proof}
Suppose that $J_g$ and $J_h$ are radical. Similarly to the proof of Proposition \ref{Thm:splayedgeom} one concludes that
the ideals  $J_{gh}$ and $(g,h)\cap J_g \cap J_h$ are equal. We compute the radical 
$$\sqrt{J_{gh}}=\sqrt{(g,h)\cap J_g \cap J_h}=\sqrt{(g,h)} \cap \sqrt{J_g} \cap
\sqrt{J_h}=(g,h)\cap J_g \cap J_h=J_{gh},$$ 
where the third equality holds because of our assumptions. 
Conversely, suppose
that $J_{gh}=\sqrt{J_{gh}}$. Then $gh$ is an element of $J_{gh}$ (cf.~Remark  \ref{Rmk:BrianconSkoda}).
Localization of $\C\{x,y\}$ in $g$ yields $(J_{gh})_g=((h)+J_h)_g$, which is radical, since
$J_{gh}$ is radical. Note that for an ideal $I \subseteq \C\{x,y\}$, we denote by $I_g$ the localization of $I$ in $g$ (cf.~\cite{Matsumura86}). Using that $((h)+J_h)_g$ is an intersection of prime
ideals in the localization, a direct computation shows that any element of $\sqrt{((h)+J_h)}$ is already contained in $((h)+J_h)$.
Similarly one proves
$((g)+J_g)=\sqrt{((g)+J_g)}$.  
\end{proof}

\begin{lem} \label{Lem:produktradikal}
Let $D\subseteq S$ be a divisor given at $p \in S$ by $\{ gh=0\}$ with $gh \in \mc{O}_{S,p}$ reduced and suppose that $J_{gh}$ is radical. Then 
$$J_{gh}=gJ_h+hJ_g.$$
In particular, $D=D_1 \cup D_2$ defined by $D_1=\{g=0\}$ and $D_2=\{h=0\}$ is splayed at $p$.
\end{lem}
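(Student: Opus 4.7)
The strategy is to reduce to the geometric characterization of splayedness in Proposition \ref{Thm:splayedgeom}, and then to pass from splayedness to the Leibniz property via Aluffi's reformulation (\ref{Equ:leibnizmod}). As a first step, by Remark \ref{Rmk:BrianconSkoda} the radicality of $J_{gh}$ forces $gh\in J_{gh}$, so $(gh)+J_{gh}=J_{gh}$; thus Proposition \ref{Thm:splayedgeom} reduces the splayedness of $(D,p)$ to establishing the single identity
\[
J_{gh}=(g,h)\cap ((g)+J_g)\cap ((h)+J_h).
\]

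The easy inclusion ``$\subseteq$'' is immediate from the product rule $\partial_i(gh)=g\,\partial_i h+h\,\partial_i g$: the first summand lies in both $(g)$ and $J_h$, and the second in both $(h)$ and $J_g$, so each generator of $J_{gh}$ belongs to all three ideals on the right. For the reverse inclusion I would pass to germs of analytic sets. Since $gh\in J_{gh}$, one has $V(J_{gh})=V(J_{gh}+(gh))=\Sing D$, and because $g$ and $h$ are coprime the reduced singular locus decomposes set-theoretically as
\[
\Sing D \;=\; \Sing D_1\,\cup\,\Sing D_2\,\cup\,(D_1\cap D_2),
\]
i.e.\ $V(J_{gh})=V((g)+J_g)\cup V((h)+J_h)\cup V(g,h)$. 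Taking vanishing ideals on both sides and applying R\"uckert's analytic Nullstellensatz in $\calo_{S,p}=\C\{\xin\}$, combined with $J_{gh}=\sqrt{J_{gh}}$, gives
\[
J_{gh}=\sqrt{(g)+J_g}\,\cap\,\sqrt{(h)+J_h}\,\cap\,\sqrt{(g,h)} \;\supseteq\; ((g)+J_g)\cap ((h)+J_h)\cap (g,h),
\]
which supplies the missing inclusion.

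Combining the two inclusions and invoking Proposition \ref{Thm:splayedgeom} then shows that $D$ is splayed at $p$. Aluffi's observation (\ref{Equ:leibnizmod}) gives $(gh)+J_{gh}=(gh)+gJ_h+hJ_g$; the left-hand side equals $J_{gh}$ because $gh\in J_{gh}$, while the right-hand side equals $gJ_h+hJ_g$ because $gh\in J_{gh}\subseteq gJ_h+hJ_g$ by the product rule, so we conclude $J_{gh}=gJ_h+hJ_g$. The main technical point is the set-theoretic decomposition of $\Sing D$: this uses crucially that $g$ and $h$ are coprime, so that $D_1$ and $D_2$ share no common component and every point of $D_1\cap D_2$ lies on at least two distinct local components of $D$ and is therefore singular; once this is in place, the passage through R\"uckert's Nullstellensatz is purely formal.
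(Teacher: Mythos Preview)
Your argument is correct, but it proceeds along a genuinely different route from the paper's. The paper establishes the Leibniz property \emph{first}, by a short elementary argument: from $g\,\partial_{x_i}(gh)\in J_{gh}$ and $gh\in J_{gh}$ one gets $g^2\,\partial_{x_i}h\in J_{gh}$; writing the radical ideal $J_{gh}$ as an intersection of primes $\mf{p}_j$, primality gives $g\in\mf{p}_j$ or $\partial_{x_i}h\in\mf{p}_j$ for each $j$, hence $g\,\partial_{x_i}h\in J_{gh}$. This yields $gJ_h+hJ_g\subseteq J_{gh}$, and the reverse inclusion is automatic from the product rule. Splayedness then follows from Proposition~\ref{Thm:Leibnizprod}.

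You instead establish \emph{splayedness} first, via the geometric criterion of Proposition~\ref{Thm:splayedgeom}: the set-theoretic decomposition $\Sing D=\Sing D_1\cup\Sing D_2\cup(D_1\cap D_2)$ together with R\"uckert's Nullstellensatz and $J_{gh}=\sqrt{J_{gh}}$ gives exactly the ideal identity~(\ref{Equ:splayedgeom}). You then pull back to the Leibniz property through Aluffi's reformulation~(\ref{Equ:leibnizmod}) and the observation $gh\in J_{gh}\subseteq gJ_h+hJ_g$. Both routes rely on Remark~\ref{Rmk:BrianconSkoda}. The paper's argument is shorter and uses only the prime decomposition of a radical ideal, staying entirely inside elementary commutative algebra; your argument is more geometric in flavour but imports more machinery (the analytic Nullstellensatz, Proposition~\ref{Thm:splayedgeom}, and~(\ref{Equ:leibnizmod})). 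Either way the logic is sound.
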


\begin{proof}
Consider the element $g\D_{x_i}(gh)$, which is contained in $J_{gh}$ for any $i=1, \ldots, n$. Since $J_{gh}$ is radical, $gh$ is contained in $J_{gh}$, which forces $g^2(\D_{x_i}h)$ to be in
$J_{gh}$. Using that $J_{gh}$ is an intersection of prime ideals $\mf{p}_j$, $j=1, \ldots, k$, one obtains that either $g$ or $\D_{x_i}h$ is contained in any $\mf{p_j}$. Hence $g(\D_{x_i}h) \in
J_{gh}$. This yields
$$J_{gh}=(g \D_{x_1}(h), \ldots, g \D_{x_n}(h), h \D_{x_1}(g), \ldots, h \D_{x_n}(g))=g
J_h + h J_g.$$
By Proposition \ref{Thm:Leibnizprod} $(D,p)$ is splayed.
\end{proof}

\begin{exa}
Splayed divisors need not have radical Jacobian ideals, as the following example shows. Let $D$ be the divisor in $(\C^3,0)$ with coordinates $(x,y,z)$ at
$0$, that is defined by $gh=x(y^2+z^3)$. Then clearly $(D,0)$ is splayed. The
Jacobian ideal is $J_{gh}=(y^2+z^3,xy,xz^2)=(y,z^2) \cap (x,y^2+z^3)$, which is
not radical. Note that $D$ is a free divisor.
\end{exa}

\begin{proposition} \label{Prop:produktfreireduktion}
Let $D=D_1 \cup D_2$ be a divisor in an $n$ dimensional complex manifold $S$ and let $D$, $D_1$ and $D_2$ at a point $p \in S$ be defined by the equations  $gh$,  $g$ and $h$, respectively. Suppose
that $J_{gh}$ is radical. Then $D$ is splayed and $J_h$ and
$J_g$ are also radical. If moreover $D$ is free at $p$ then also $D_1$ and $D_2$
are free at $p$.
\end{proposition}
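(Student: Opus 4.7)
The plan is to assemble the three preceding results---Lemma \ref{Lem:produktradikal}, Lemma \ref{Lem:produktfreiradikal}, and Proposition \ref{Prop:produktfreiallg}---in sequence; the proposition is essentially their concatenation.

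First I would invoke Lemma \ref{Lem:produktradikal} directly. Since $J_{gh}$ is radical by hypothesis, that lemma produces both the Leibniz identity $J_{gh}=gJ_h+hJ_g$ and the conclusion that $(D,p)=(D_1,p)\cup(D_2,p)$ is splayed. This settles the first half of the first assertion immediately, without further computation.

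Next, to deduce that $J_g$ and $J_h$ are themselves radical, I would apply Lemma \ref{Lem:produktfreiradikal}. That lemma is stated for splayed defining equations already written in separated coordinates in the sense of Definition \ref{Def:produkt}; since splayedness of $(D,p)$ is now available, we may, after a local biholomorphism of $(S,p)$ and after multiplying $g$ and $h$ by units, assume that $g=g(x_1,\dots,x_k,0)$ and $h=h(0,y_{k+1},\dots,y_n)$. Such operations induce isomorphisms of $\mc{O}_{S,p}$ which carry the Jacobian ideals $J_g$, $J_h$, and $J_{gh}$ to their images, so radicality is preserved in both directions; Lemma \ref{Lem:produktfreiradikal} therefore yields that $J_g$ and $J_h$ are radical, in the separated coordinates and hence in the originals.

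For the second assertion, I would simply cite Proposition \ref{Prop:produktfreiallg} in its ``$\Rightarrow$'' direction: once $(D,p)$ is known to be splayed with components $(D_1,p)$, $(D_2,p)$ sitting as cylinders in a product decomposition $(S,p)\cong(\C^k,0)\times(\C^{n-k},0)$, freeness of $D$ at $p$ forces freeness of both $D_1$ and $D_2$ at $p$. I do not anticipate a real obstacle here: the proof is a clean combination of earlier results, and the only book-keeping is the remark above that the coordinate change and unit factors needed to put $g,h$ in separated form preserve radicality of the relevant Jacobian ideals.
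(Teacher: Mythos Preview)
Your proof is correct and follows the same strategy as the paper: the proposition is obtained by stringing together the preceding results. The paper's one-line proof cites Proposition~\ref{Thm:Leibnizprod}, Proposition~\ref{Prop:produktfreiallg}, and Lemma~\ref{Lem:produktradikal}; you instead invoke Lemma~\ref{Lem:produktradikal}, Lemma~\ref{Lem:produktfreiradikal}, and Proposition~\ref{Prop:produktfreiallg}, which is arguably the cleaner list since Lemma~\ref{Lem:produktfreiradikal} is precisely what delivers the radicality of $J_g$ and $J_h$ (the paper's reference to Proposition~\ref{Thm:Leibnizprod} is already absorbed in Lemma~\ref{Lem:produktradikal}). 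Your remark that the coordinate change and unit factors needed to put $g,h$ in separated form preserve radicality of the Jacobian ideals is a useful piece of bookkeeping that the paper leaves implicit.
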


\begin{proof}
This follows from proposition \ref{Thm:Leibnizprod}, proposition \ref{Prop:produktfreiallg} and lemma \ref{Lem:produktradikal}.
\end{proof}

\begin{cor}  \label{Cor:smoothcomp}
Let $(S,D)$ be a complex manifold, $\dim S=n$, together with a divisor $D \subseteq S$ and suppose that locally at a point $p \in S$ the divisor $(D,p)$ has a  decomposition into irreducible
components $\bigcup_{i=1}^m
(D_i,p)$ such that each $(D_i,p)$ is smooth. Let the corresponding equation of $D$ at $p$ be $h=h_1 \cdots h_m$.
If $D$ is free at $p$ and $J_h=\sqrt{J_h}$ then $D$ has normal crossings at $p$.
\end{cor}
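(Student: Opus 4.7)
My plan is to prove the corollary by induction on the number $m$ of smooth irreducible components, using Proposition \ref{Prop:produktfreireduktion} as the key reduction tool.

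\textbf{Base case} ($m=1$): Here $(D,p)$ is itself smooth, so by the holomorphic implicit function theorem we may choose coordinates at $p$ so that $h = x_1$, which is the trivial normal crossings form.

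\textbf{Inductive step.} Assume the statement for any divisor at $p$ whose local decomposition has at most $m-1$ smooth irreducible components. Given $(D,p) = \bigcup_{i=1}^m (D_i,p)$ with $h = h_1 \cdots h_m$, set $g := h_1$ and $h' := h_2 \cdots h_m$, so that $D = D_1 \cup D'$ where $D' = \bigcup_{i=2}^m D_i$. Because the $D_i$ are distinct irreducible components, $g$ and $h'$ share no common factor. By hypothesis $J_{gh'} = J_h$ is radical and $D$ is free at $p$, so Proposition \ref{Prop:produktfreireduktion} applies and yields: $D = D_1 \cup D'$ is splayed at $p$, both $J_g$ and $J_{h'}$ are radical, and both $D_1$ and $D'$ are free at $p$. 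Since $D' = D_2 \cup \cdots \cup D_m$ has $m-1$ smooth irreducible components, the induction hypothesis gives that $D'$ has normal crossings at $p$.

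\textbf{Assembling the normal crossings form.} By Definition \ref{Def:produkt}, splayedness of $D = D_1 \cup D'$ provides coordinates $(x_1,\ldots,x_k,y_{k+1},\ldots,y_n)$ at $p$ in which $g = g(x_1,\ldots,x_k)$ and $h' = h'(y_{k+1},\ldots,y_n)$. Smoothness of $D_1$ lets us apply the implicit function theorem in the $x$-variables alone to change coordinates $x_i \mapsto \tilde x_i(x)$ so that $g$ becomes $\tilde x_1$. Normal crossings of $D'$ at $p$, expressed in the $y$-variables alone, lets us independently change coordinates $y_j \mapsto \tilde y_j(y)$ so that $h'$ becomes $\tilde y_{k+1}\cdots \tilde y_{k+s}$ for some $s \le n-k$. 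Because these two coordinate changes act on disjoint sets of variables, they combine into a single biholomorphism of $(S,p)$, after which $h = \tilde x_1 \tilde y_{k+1}\cdots \tilde y_{k+s}$. This is precisely the normal crossings form, completing the induction.

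\textbf{Main obstacle.} The only non-cosmetic step is the inductive reduction, and its soundness rests entirely on Proposition \ref{Prop:produktfreireduktion}: one must know that radicality of $J_h$ plus freeness of $D$ propagate simultaneously to \emph{both} factors $D_1$ and $D'$, not just to one of them. Once that is granted, splitting off one smooth component at a time and recombining via the clean variable separation provided by splayedness is straightforward. A minor subtlety worth checking is that no $h_i$ appears with multiplicity (so that $g$ and $h'$ are coprime as required by Proposition \ref{Prop:produktfreireduktion}); this is automatic because $h$ is assumed reduced and the $D_i$ are distinct irreducible components.
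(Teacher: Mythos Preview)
Your proof is correct and follows essentially the same strategy as the paper's: peel off one smooth component via Proposition \ref{Prop:produktfreireduktion} and induct. The only difference is the induction variable---you induct on the number $m$ of components (with the trivial base case $m=1$ of a single smooth hypersurface), whereas the paper inducts on the ambient dimension $n$ (with base case $n=2$ settled by an appeal to Mather--Yau or an explicit computation); your choice makes the base case cleaner, but otherwise the arguments are the same.
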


\begin{proof}
We use induction on $n$. If $n=2$, then an explicit computation or an application of the theorem of Mather--Yau (\cite{MY}) shows the assertion. 
Now suppose the assertion is true for divisors in manifolds of dimension $n-1$. For a smooth component $D_1$ of $D$, one can
find local coordinates $(x_1, \ldots, x_n)$ such that $D_1=\{x_1
=0\}$. Proposition \ref{Prop:produktfreireduktion} shows that the divisor $(D \setminus D_1):=h_2 \cdots
h_m$ is also free and has a radical Jacobian ideal. Moreover, $D $ is locally 
splayed, that is, $D \setminus D_1$ is locally isomorphic to some divisor depending
only on the last $n-1$ coordinates. Thus by induction hypothesis $D \setminus
D_1$ is isomorphic to a normal crossings divisor $y_2 \cdots y_m=0$, where the $y_i$ are the result of a coordinate transformation of $(x_1, \ldots, x_n)$ such that $x_1=y_1$. 
This implies that $m \leq n-1$. Hence $D$ is isomorphic to the normal
crossings divisor $x_1 y_2 \cdots y_m$. 
\end{proof}

\subsection{Hilbert--Samuel polynomials} \label{Sub:HSP}

Splayed divisors are particularly interesting for
computational reasons. We start here a study of properties of splayed divisors by considering their Hilbert--Samuel polynomials. We find that multiplicities behave the same for splayed as for
non--splayed divisors but that the
Hilbert--Samuel polynomials for splayed divisors are additive, which means the
following: let $(D,p)=(D_1,p) \cup (D_2,p)$ be a splayed divisor at a point $p$ in a complex manifold. Then from the exact sequence 
$$ 0 \rightarrow \mc{O}_{D,p} \rightarrow \mc{O}_{D_1,p} \oplus \mc{O}_{D_2,p}
\rightarrow \mc{O}_{D_1 \cap D_2,p} \ra 0$$
follows
$$\chi_{D,p}+\chi_{D_1 \cap D_2,p}=\chi_{D_1,p} + \chi_{D_2,p},$$ 
where $\chi_{D,p}$ denotes the Hilbert--Samuel polynomial of $D$ at $p$. Since one can compute Hilbert--Samuel polynomials of divisors in an easy way (\cite[Lemma 4.2.20]{dJP}), this yields a method
to compute $\chi_{D_1 \cap D_2,p}$. \\

The Hilbert--Samuel function and the Hilbert--Samuel polynomial 
do not only depend on the module one is considering but also on a chosen filtration.
However, the degree and leading coefficient of the Hilbert--Samuel polynomial are
independent of the filtration. 
We use notation from \cite{Matsumura86} and
\cite{dJP}. \\

Let $R$ be a noetherian local ring with maximal ideal $\mf{m}$. Let $I \subseteq R$ be an ideal and let $M$ be a module over $R$. A set $\{ M_n\}_{n \geq 0}$ of submodules of $M$ is called an
$I$-\emph{filtration} of $M$ if $M=M_0 \supset M_1 \supset M_2 \supset \ldots$ and $IM_n \subseteq M_{n+1}$ for all $n \geq 0$. \\
In the following we will always suppose that $M$ is a finitely generated $R$ module. Let $\mf{q}$ be an $\mf{m}$-primary ideal of $R$ and let $\{M_i\}$ be a $\mf{q}$-filtration. Then the
\emph{Hilbert--Samuel function of the filtration $\{M_i\}$} is  
$$HS_{\{M_i\}}: \N \rightarrow \N, d \mapsto \mathrm{length}_{R/ \mf{m}}M/ M_d.$$
If $M_i=\mf{m}^i M$ then we simply write $HS_M$. One can show that there exists a polynomial $\chi_{\{
M_i \}}$ with rational coefficients such that $HS_{\{M_i\}}(d)=\chi_{\{M_i\}}(d)$
for $d$ sufficiently large. We call $\chi_{M}^\mf{q}:=\chi_{\{\mf{q}^n M\}_{n
\geq 0}}$ the \emph{Hilbert--Samuel polynomial of $M$ with respect to $\mf{q}$}.
The degree $d$ of $\chi_{M}^\mf{q}(k)=\sum_{i=0}^d a_i k^i$ only depends on $M$
and not on $\mf{q}$. \\

When considering Hilbert--Samuel polynomials of modules over a local ring
$\mc{O}=\C\{x_1, \ldots, x_n\}$ with respect to the maximal ideal $\mf{m}$, one can use
 standard bases to simplify computations. 
For definitions and notation about standard bases we refer to \cite{dJP}. Here we only need the following facts: \\

{\bf Fact (i)} Take the degree lexicographical ordering $<$. Then for an ideal $I \subseteq \calo$ one has
$$HS_{\calo/I}(k)=HS_{\calo/L(I)}(k),$$ 
where $L(I)$ denotes the leading ideal of $I$. In particular,
$\calo/I$ and $\calo/L(I)$ have the same Hilbert--Samuel polynomial with
respect to $\mf{m}$. \\
{\bf Fact (ii)} Let $f, g \in \calo$ and assume that $L(f)$ and $L(g)$ are coprime. Then $L((f,g))=(L(f),L(g))$, that is, $f,g$ are a standard basis of the ideal
$(f,g)$. \\

In contrast to the graded case, the Hilbert--Samuel polynomial is not additive on exact sequences, one has a certain error polynomial, whose degree can be determined with a theorem of Flenner and
Vogel, see \cite{FlennerVogel93}:

\begin{thm}[Flenner--Vogel] \label{Thm:FlennerVogel}
Let $(R, \mf{m})$ be a noetherian local ring, $0 \rightarrow M' \rightarrow M \rightarrow M'' \rightarrow 0$ an exact sequence of  finitely generated $R$-modules and $\mf{q}$ an $\mf{m}$-primary
ideal.  Denote further
$$\mathrm{Gr}_\mf{q}(N)=\bigoplus_{i=0}^\infty \mf{q}^iN/\mf{q}^{i+1}N$$
 the associated graded module of a finite $R$-module $N$. Then the following holds: \\
(a) $\mathrm{supp} \ker (\mathrm{Gr}_\mf{q}(M') \rightarrow \mathrm{Gr}_\mf{q}(M))=\mathrm{supp} \ker (\mathrm{Gr}_\mf{q}(M)/\mathrm{Gr}_\mf{q}(M') \rightarrow \mathrm{Gr}_\mf{q}(M''))$. \\
(b) Denote by $d$ the dimension of these supports. Then for all there is a polynomial $S$ of degree $d-1$ such that for $n \gg
0$ we have
$$ S(n):=\chi_{M'}^\mf{q}(n) + \chi_{M''}^\mf{q}(n)-\chi_{M}^\mf{q}(n).$$ 
In particular, if
$d=0$, then $S=0$.
\end{thm}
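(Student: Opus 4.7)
The plan is to reduce both parts to the Hilbert function of a single graded module and to exploit a nilpotency coming from Artin--Rees. From the given short exact sequence, one extracts
\[ 0 \to M'/(M' \cap \mf{q}^n M) \to M/\mf{q}^n M \to M''/\mf{q}^n M'' \to 0, \]
so $\chi^\mf{q}_M(n) = \ell(M'/(M' \cap \mf{q}^n M)) + \chi^\mf{q}_{M''}(n)$ for large $n$, and subtracting yields
\[ S(n) = \ell\bigl((M' \cap \mf{q}^n M)/\mf{q}^n M'\bigr). \]
Writing $F_n := \mf{q}^n M'$, $N_n := M' \cap \mf{q}^n M$ and $T_n := N_n/F_n$, we have $S(n) = \ell(T_n)$; by Artin--Rees, $T := \bigoplus_n T_n$ is a finitely generated graded module over the Rees algebra $R^* := R[\mf{q}t]$, and each $T_n$ has finite $R$-length because $\mf{q}$ is $\mf{m}$-primary.

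For part (a), a short direct calculation identifies the two kernels in the statement with, respectively, the kernel $K$ and the cokernel $Q$ of the natural map $\varphi\colon\mathrm{Gr}_\mf{q}(M')\to\bigoplus_n N_n/N_{n+1}$ induced by $F_n \subseteq N_n$; explicitly $K_n = (F_n \cap N_{n+1})/F_{n+1}$ and $Q_n = N_n/(F_n + N_{n+1})$. One then verifies the four-term exact sequence of graded $R^*$-modules
\[ 0 \to K \to T(1) \xrightarrow{\pi} T \to Q \to 0, \]
where $\pi\colon T_{n+1} \to T_n$ is induced by the inclusions $N_{n+1} \subseteq N_n$ and $F_{n+1} \subseteq F_n$. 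Artin--Rees further provides a constant $c$ with $N_{n+c} \subseteq F_n$ for all $n$, which forces $\pi^c = 0$ as a graded morphism $T(c) \to T$.

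At any prime $\mf{p}$ of $R^*$ with $T_\mf{p} \neq 0$, the localized map $\pi_\mf{p}$ can be neither injective nor surjective: otherwise $\pi^c_\mf{p}$ would be injective (resp.\ surjective) and at the same time zero, forcing $T_\mf{p} = 0$. Hence $\mf{p}$ lies in $\mathrm{supp}(K) \cap \mathrm{supp}(Q)$; combined with the inclusions $\mathrm{supp}(K), \mathrm{supp}(Q) \subseteq \mathrm{supp}(T)$ from the above exact sequence, this yields $\mathrm{supp}(K) = \mathrm{supp}(Q) = \mathrm{supp}(T)$, proving (a). Setting $d := \dim T$, standard Hilbert--Samuel theory for finitely generated graded modules over $R^*$ (the non-artinian base $R^*_0 = R$ being handled by passing to a quotient by a suitable power of $\mf{q}R^*$ that annihilates $T$) then gives $\ell(T_n) = S(n)$ polynomial of degree $d-1$ for $n$ large, proving (b).

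The main obstacle is the nilpotency $\pi^c = 0$ coming from Artin--Rees; once this is in hand, the identification of supports in (a) is essentially formal from the localization argument, and (b) follows from standard Hilbert--Samuel theory. The remaining technical wrinkle, namely Hilbert--Samuel for graded modules over the non-standard Rees algebra, is not serious: since $T$ is supported on $V(\mf{q}R^*)$ (as (a) shows), it is a module over $R^*/(\mf{q}R^*)^k$ for some $k$, which has artinian degree-zero piece $R/\mf{q}^k$, and standard Hilbert--Serre applies.
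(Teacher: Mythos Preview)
The paper does not prove this theorem at all: it is quoted from \cite{FlennerVogel93} and used as a black box in the proof of Proposition~\ref{Prop:HSPadditiv}. There is therefore no argument in the paper to compare yours against.

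For what it is worth, your sketch is essentially the standard proof and is correct. Two small points you may want to tighten. First, the sentence ``$T$ is supported on $V(\mf{q}R^*)$ (as (a) shows)'' is not what (a) gives you; rather, the stable form of Artin--Rees ($N_n=\mf{q}^{\,n-c}N_c$ for $n\ge c$) yields $\mf{q}^{\,c}N_n\subseteq \mf{q}^{\,n}M'=F_n$, so $\mf{q}^{\,c}$ in degree~$0$ annihilates $T_n$ for $n\ge c$, and $T_{\ge c}$ is a finitely generated graded module over a graded ring with Artinian degree-zero piece $R/\mf{q}^{\,c}$. Second, the identification of $d=\dim\mathrm{supp}(T)$ with $\deg S+1$ needs the remark that discarding the finitely many pieces $T_0,\ldots,T_{c-1}$ (a finite-length module) does not change the dimension of the support when $d\ge 1$; the case $d=0$ is immediate since then $T$ has finite length and $T_n=0$ for $n\gg 0$. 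With these adjustments the argument goes through cleanly.
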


Before turning to splayed divisors, consider the additivity problem of the Hilbert--Samuel polynomial for arbitrary finitely generated modules over a local ring $(R, \mf{m})$.
If 
$$0 \rightarrow N \rightarrow M \rightarrow M/N \rightarrow 0$$
is an exact sequence of finitely generated $R$-modules and $\mf{q}$ an $\mf{m}$-primary ideal, then 
\begin{equation} \label{Equ:HSPerror} 0 \rightarrow N/(\mf{q}^n M \cap N) \rightarrow M/\mf{q}^nM \rightarrow (M/N)/\mf{q}^n(M/N) \rightarrow 0 
\end{equation}

is an exact sequence, which implies 
$$\chi_{M}^\mf{q}=\chi_{M/N}^\mf{q}+\chi_{\{\mf{q}^nM \cap N\}}.$$
Here $\chi_{\{\mf{q}^nM \cap N \}}$ denotes the Hilbert--Samuel polynomial with respect to the filtration $N/(\mf{q}^nM \cap N)$ and in general one has $\chi_{N}^\mf{q} \neq \chi_{\{\mf{q}^nM \cap N\}}$. 
However, for split exact sequences the Hilbert--Samuel polynomial is always additive:

\begin{lem} \label{Lem:hilbertdirektesumme}
Let $(R, \mf{m})$ be a local ring and let $M, N$ be finitely generated $R$-modules. Consider the exact sequence 
$$0 \rightarrow N \rightarrow N \oplus M \rightarrow M \rightarrow 0.$$
Then $\chi_{N \oplus M}^\mf{q}=\chi_{M}^\mf{q}+ \chi_{N}^\mf{q}$ for any $\mf{m}$-primary ideal $\mf{q}$.
\end{lem}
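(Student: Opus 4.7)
The plan is to exploit the fact that the given short exact sequence is split, so the $\mathfrak{q}$-adic filtrations on $N$, $M$ and $N\oplus M$ are compatible in a strong sense. Concretely, I would first observe that for any ideal $\mathfrak{q}\subseteq R$ one has the equality
\[
\mathfrak{q}^n(N\oplus M)=\mathfrak{q}^n N\oplus \mathfrak{q}^n M,
\]
which is immediate from the componentwise definition of the module structure on a direct sum.

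From this the quotient decomposes as
\[
(N\oplus M)/\mathfrak{q}^n(N\oplus M)\;\cong\;N/\mathfrak{q}^n N\;\oplus\;M/\mathfrak{q}^n M,
\]
and since $\mathfrak{q}$ is $\mathfrak{m}$-primary the quotients on both sides have finite length over $R/\mathfrak{m}$. Using additivity of length on direct sums, I would deduce
\[
HS_{N\oplus M}^{\mathfrak{q}}(n)=HS_{N}^{\mathfrak{q}}(n)+HS_{M}^{\mathfrak{q}}(n)
\]
for every $n\geq 0$. Since two polynomials that agree with $HS^{\mathfrak{q}}_{\bullet}$ for $n$ sufficiently large must then coincide, the equality $\chi_{N\oplus M}^{\mathfrak{q}}=\chi_{N}^{\mathfrak{q}}+\chi_{M}^{\mathfrak{q}}$ follows.

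There is no serious obstacle here: the statement is essentially a sanity check that the error polynomial arising in \eqref{Equ:HSPerror} vanishes in the split case. One could alternatively deduce it from Theorem \ref{Thm:FlennerVogel} by verifying that $\ker(\mathrm{Gr}_{\mathfrak{q}}(N)\to \mathrm{Gr}_{\mathfrak{q}}(N\oplus M))=0$, but the direct length argument above is cleaner and avoids invoking Flenner--Vogel. The only point worth stating explicitly in the write-up is the identity $\mathfrak{q}^n(N\oplus M)\cap N=\mathfrak{q}^n N$, which ensures that the filtration induced on $N$ by the $\mathfrak{q}$-adic filtration on $N\oplus M$ agrees with its own $\mathfrak{q}$-adic filtration, so that the intermediate polynomial $\chi_{\{\mathfrak{q}^n(N\oplus M)\cap N\}}$ equals $\chi_{N}^{\mathfrak{q}}$.
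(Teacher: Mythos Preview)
Your proof is correct and matches the paper's approach essentially verbatim: the paper's proof consists of the single observation that $N\cap \mathfrak{q}^n(N\oplus M)=\mathfrak{q}^n N$ for all $n$, followed by an appeal to the exact sequence \eqref{Equ:HSPerror}. Your write-up is simply a more explicit unpacking of that same computation.
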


\begin{proof}
A direct computation shows that $N \cap \mf{q}^n (N\oplus M)=\mf{q}^n N$ for any
$n$. Using the exact sequence (\ref{Equ:HSPerror}) the assertion follows.
\end{proof}

Let now
$D=D_1 \cup D_2\subseteq \C^{n}$ be a not necessarily splayed divisor that is locally at a point $p=(x)$ defined by $h_1(x) h_2(x) \in \mc{O}=\C\{x\}$, with components $(D_1,p)=\{
h_1(x)=0\}$ and $D_2=\{h_2(x)=0\}$. 
The multiplicities of $D_i$ at $p$ are denoted by $m_p(D_i):=\mf{e}(\mc{O}_{S,p}/(h_i), \mf{m})$.
The Hilbert--Samuel polynomial of $D_i$ at $p$ is denoted by
$\chi_{D_i,p}:=\chi_{\mc{O}/(h_i)}^\mf{m}$, and similarly the multiplicity and Hilbert--Samuel polynomial for $D$. 

\begin{rem}
In order to compute the Hilbert--Samuel polynomial of $\mc{O}/I$ for any ideal
$I \subseteq \mc{O}$ we can consider $\mc{O}/I$ either as ring or as an
$\mc{O}$-module. This does not make a difference for the Hilbert--Samuel
functions, since they only depend on the graded structure of $\mc{O}/I$.
\end{rem}

It is well-known that multiplicities of divisors are additive, more precisely one can show that for $D$ and $D_i$ defined as above, $m_p(D)= m_p(D_1) + m_p(D_2)$ (see e.g. \cite{dJP}). For
splayed divisors the additivity also holds for Hilbert--Samuel polynomials:

\begin{proposition} \label{Prop:HSPadditiv}
Let $(D,p)=(D_1,p) \cup (D_2,p)$ be splayed at $p \in S$, where $(S,p) \cong (\C^n,0)$. Then the Hilbert--Samuel polynomials of the components $D_1$ and $D_2$ are additive, that is,
$$\chi_{D,p}(t)+\chi_{D_1 \cap D_2,p}(t)=\chi_{D_1,p}(t)+\chi_{D_2,p}(t).$$
\end{proposition}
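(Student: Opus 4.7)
The plan is to apply the Flenner--Vogel theorem (Theorem \ref{Thm:FlennerVogel}) to the natural Mayer--Vietoris exact sequence, and to show that the \emph{a priori} error polynomial vanishes because the splayed structure forces the initial forms of $g$ and $h$ to live in disjoint variables.

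Since $(D,p)$ is splayed, I first choose coordinates $(x_1,\ldots,x_k,y_{k+1},\ldots,y_n)$ so that $(D_1,p) = \{g(x)=0\}$ and $(D_2,p) = \{h(y)=0\}$ with $g,h$ reduced and in separated variables, hence without common factor. Then $(g)\cap(h) = (gh)$ and $(g) + (h) = (g,h)$ in $\mc{O}_{S,p}$, which yields the stated short exact sequence $0 \to \mc{O}_{D,p} \to \mc{O}_{D_1,p}\oplus\mc{O}_{D_2,p} \to \mc{O}_{D_1\cap D_2,p} \to 0$. By Lemma \ref{Lem:hilbertdirektesumme} the Hilbert--Samuel polynomial of the middle term with respect to $\mf{m}$ equals $\chi_{D_1,p}+\chi_{D_2,p}$, so the assertion reduces to showing that the error polynomial $S$ produced by Theorem \ref{Thm:FlennerVogel} is identically zero.

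By Theorem \ref{Thm:FlennerVogel}(b), it is enough to verify that the kernel $\ker\bigl(\mathrm{Gr}_\mf{m}(\mc{O}_{D,p}) \to \mathrm{Gr}_\mf{m}(\mc{O}_{D_1,p}\oplus\mc{O}_{D_2,p})\bigr)$ has empty (equivalently, $0$-dimensional) support. Setting $R = \mathrm{Gr}_\mf{m}(\mc{O}_{S,p}) \cong \C[x,y]$, a standard argument shows that for any nonzero $f\in\mc{O}_{S,p}$ the initial ideal of the principal ideal $(f)$ is generated by the initial form $\mathrm{in}(f)$, so $\mathrm{Gr}_\mf{m}(\mc{O}/(f)) = R/(\mathrm{in}(f))$. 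The splayed structure now enters decisively: since $g\in\C\{x\}$ and $h\in\C\{y\}$ involve disjoint variables, so do $\mathrm{in}(g)\in\C[x]$ and $\mathrm{in}(h)\in\C[y]$, and moreover $\mathrm{in}(gh) = \mathrm{in}(g)\cdot\mathrm{in}(h)$. Because $R$ is a domain and $\mathrm{in}(g),\mathrm{in}(h)$ are coprime, $(\mathrm{in}(g))\cap(\mathrm{in}(h)) = (\mathrm{in}(g)\mathrm{in}(h))$, and the induced map $R/(\mathrm{in}(g)\mathrm{in}(h)) \to R/(\mathrm{in}(g))\oplus R/(\mathrm{in}(h))$ is injective. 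Hence the kernel is zero, $S$ vanishes by Theorem \ref{Thm:FlennerVogel}(b), and the additivity formula follows.

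The main technical point to get right is the identification of the induced map on $\mathrm{Gr}_\mf{m}$ with the expected Mayer--Vietoris map in $R$, i.e., that a class of $f\in\mc{O}_{D,p}$ is sent to the pair of classes of $\mathrm{in}(f)$ in the two quotients $R/(\mathrm{in}(g))$ and $R/(\mathrm{in}(h))$; once this is verified, the argument reduces to the coprimality of $\mathrm{in}(g)$ and $\mathrm{in}(h)$ in $R$, which is immediate from the splayed structure. An alternative route, avoiding Flenner--Vogel, would use Facts (i) and (ii) on standard bases with respect to degree-lex order to replace $(g), (h), (gh), (g,h)$ by their leading monomial ideals and then verify the additivity by a direct inclusion-exclusion count of monomials of degree $<t$, but the graded-initial argument above is more conceptual.
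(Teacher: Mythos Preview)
Your proof is correct and follows the same overall strategy as the paper: both use the Mayer--Vietoris sequence, Lemma~\ref{Lem:hilbertdirektesumme} for the middle term, and Theorem~\ref{Thm:FlennerVogel} to conclude once injectivity on associated graded is established.

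The difference lies in how injectivity is obtained. The paper first invokes Facts (i) and (ii) to replace $g,h,gh,(g,h)$ by their leading \emph{monomial} ideals $(x^\alpha),(y^\beta),(x^\alpha y^\beta),(x^\alpha,y^\beta)$ with respect to a degree-lex order, and only then applies Flenner--Vogel to the monomial sequence, checking injectivity of $\mathrm{Gr}_\mf{m}(\mc{O}/(x^\alpha y^\beta))\to\mathrm{Gr}_\mf{m}(\mc{O}/(x^\alpha))\oplus\mathrm{Gr}_\mf{m}(\mc{O}/(y^\beta))$ by hand via Grauert's division theorem. You instead apply Flenner--Vogel directly to the original sequence and identify the graded map with the diagonal $R/(\mathrm{in}(g)\,\mathrm{in}(h))\to R/(\mathrm{in}(g))\oplus R/(\mathrm{in}(h))$ in $R=\C[x,y]$; injectivity then follows from $(\mathrm{in}(g))\cap(\mathrm{in}(h))=(\mathrm{in}(g)\,\mathrm{in}(h))$, which holds because $\mathrm{in}(g)\in\C[x]$ and $\mathrm{in}(h)\in\C[y]$ are coprime in the UFD $R$. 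Your route is more conceptual and avoids both the standard-basis reduction and the division-theorem computation. The ``technical point'' you flag---that the induced map on $\mathrm{Gr}_\mf{m}$ is indeed the diagonal quotient map---is genuine but routine: for $I\subseteq J$ the natural surjection $\mc{O}/I\to\mc{O}/J$ is $\mf{m}$-filtered and induces the quotient $R/\mathrm{in}(I)\to R/\mathrm{in}(J)$, and $\mathrm{Gr}_\mf{m}$ commutes with direct sums; so the identification holds as you need it.
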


\begin{proof}
Denote by $\mc{O}:=\C\{x,y\}=$ $\C\{x_1, \ldots, x_k,$ $y_{k+1},\ldots, y_n\}$ the coordinate ring of $(\C^n,0)$. 
There is an exact sequence (see e.g. \cite{GreuelPfister08})
\begin{equation} \label{Diag:multiplicitiesexakt}
0 \rightarrow \mc{O} /( h_1 h_2) \rightarrow \mc{O}/ (h_1) \oplus \mc{O}/
(h_2) \rightarrow \mc{O}/(h_1,h_2) \rightarrow 0.
\end{equation} 
With facts (i) and (ii) from above the question can be reduced to leading
ideals because (\ref{Diag:multiplicitiesexakt}) remains exact if we just consider the leading ideals. 
The divisor $D$ is splayed, so we can assume that it is defined by $g(x)h(y)$. Choosing any valid monomial
ordering one finds
 $L(g)=x^\alpha$, $L(h)=y^\beta$, $L((gh))=x^\alpha y^\beta$, and by fact (ii) from above follows $L((g,h))=(x^\alpha, y^\beta)$. From 
 Lemma \ref{Lem:hilbertdirektesumme} it follows that
$$\chi_{\mc{O}/(x^\alpha)}^\mf{m}+\chi_{\mc{O}/(y^\beta)}^\mf{m}=\chi_{\mc{O}/
(x^\alpha) \oplus \mc{O}/
(y^\beta)}^\mf{m}.$$
Thus it remains to prove that the Hilbert--Samuel polynomials with respect to $\mf{m}$ of
the exact sequence
$$0 \rightarrow \mc{O}/(x^\alpha \cdot y^\beta) \rightarrow \mc{O}/(x^\alpha)
\oplus \mc{O}/(y^\beta) \rightarrow \mc{O}/(x^\alpha, y^\beta)\rightarrow 0$$
are additive. In order to apply the theorem of Flenner--Vogel we 
show that the map 
$$ \xymatrix{ \mathrm{Gr}_\mf{m}(\mc{O}/(x^\alpha y^\beta)) \ar[r]^-\varphi & 
\mathrm{Gr}_\mf{m}(\mc{O}/(x^\alpha) \oplus \mc{O}/(y^\beta))}$$
is injective. The map $\varphi$ clearly preserves the degree, so
it is
enough to show the assertion for a homogeneous element of degree $d$.
Therefore take
some $\overline{a} \in \mf{m}^d (\mc{O}/(x^\alpha y^\beta))/ \mf{m}^{d+1}
(\mc{O}/(x^\alpha y^\beta))$.  
Consider $\overline{a}$ as an element in $\mc{O}$: from Grauert's division
theorem follows that $\overline{a}$ can be written as $r + \alpha_1 x^\alpha + \alpha_2
y^\beta$, where $r$ is the unique remainder from the division through
$x^\alpha$ and $y^\beta$ and $\alpha_1 \in \mc{O}$ is not divisible by $y^\beta$
and
$\alpha_2$ is not divisible by $x^\alpha$. 
Suppose that $\varphi(\overline{a})=(0,0)$. Then 
 write $\varphi(\overline{a})=(r + \alpha_2
y^\beta, r + \alpha_1 x^\alpha)$ in $\mc{O}/(x^\alpha) \oplus \mc{O}/(y^\beta)$. In $\mc{O}$ this reads as $r +
\alpha_2 y^\beta = c x^\alpha$ and $r + \alpha_1 x^\alpha = c' y^\beta$ for
some $c,c' \in \mc{O}$ with the right order. It follows that $r \in (x^\alpha , y^\beta)$. But $r$ is the unique
remainder from the division through the standard basis $(x^\alpha, y^\beta)$, so
$r=0$ in $\mc{O}$.
Since $x^\alpha, y^\beta$ are clearly a regular sequence in $\mc{O}$, their
syzygies are trivial and from the conditions on $\alpha_1, \alpha_2$ it follows
that $\alpha_1=\alpha_2=0$. This implies the injectivity of $\varphi$. 
Hence by the theorem of Flenner--Vogel, the remainder polynomial
$S$ is zero and the assertion of the proposition follows. 
\end{proof}

In general the Hilbert--Samuel polynomial of a divisor $(D,p)=(D_1,p)\cup
(D_2,p)$ is not additive, as is seen in the following example.

\begin{exa}
By Lemma 4.2.20 of \cite{dJP} one can explicitly compute the Hilbert--Samuel
polynomial of $\mc{O}/(f)$, where $\mc{O}=\C \{x_1, \ldots, x_n\}$ and
$\mathrm{ord}(f)=m$, namely
\begin{equation} \chi_{\mc{O}/(f)}^\mf{m}(d)=\sum_{j=1}^m {n+d-j-1 \choose n-1}. \label{Eq:HShyper}   \end{equation}
Consider now $(\C^2,0)$ with coordinate ring $\mc{O}=\C\{x,y\}$ and with
$h_1=x^2-y$ and $h_2=y$. Then the germ of the divisor $(D,0)=(D_1,0)\cup (D_2,0)$ that is locally
given by $\{y(x^2-y)=0\}$ with $(D_1,0)=\{x^2-y=0\}$
and $(D_2,0)=\{y=0\}$ is not splayed. The intersection $(D_1 \cap D_2,0)$ is locally given by
the ideal $(x^2,y)$ and coordinate ring $\mc{O}/(x^2,y)=\C\{x\}/(x^2)$. By 
formula (\ref{Eq:HShyper}) we can compute the Hilbert--Samuel polynomials of $D,D_1,D_2$ and
$D_1 \cap D_2$ and obtain $\chi_{D,p}(t)=2t-1$, which is clearly not equal to
$\chi_{D_1,p}(t) +\chi_{D_2,p}(t)- \chi_{D_1 \cap D_2,p}(t)= t + t -2$.
\end{exa}

One might ask if the additivity of the Hilbert--Samuel polynomials
characterizes splayed divisors. However, here is a counterexample to this assertion:

\begin{exa}
Consider $D
\subseteq \C^3$ locally defined by $gh=(x^2-y^3)(y^2-x^2z)$. Then $(D,p)$ is
the union of the cylinder over a cusp $(D_1,p)$ and of the Whitney Umbrella
$(D_2,p)$. Clearly $(D,p)$ is not splayed (use for example the
Leibniz--property). However, $L(g)=x^2$ and $L(h)=y^2$, so the leading
monomials of $g$ and $h$ are coprime and one can repeat the argument in the
proof of the preceding proposition to find that 
$$ \chi_{D,p}+ \chi_{D_1 \cap D_2,p}=\chi_{D_1,p} + \chi_{D_2,p}.$$
\end{exa}

\subsection*{Acknowledgments} I thank my advisor H. Hauser for getting me interested in this topic and helping me with comments and suggestions. I thank D.~Mond and L.~Narv{\'a}ez Macarro for several
discussions and comments on this work, and also P.~Aluffi, M.~Granger, H.~Kawanoue, S.~Perlega, D.~B.~Westra and the  referee for helpful comments on an earlier version of this paper. In particular I thank O.~Villamayor and his
research group at
the Universidad Aut{\'o}noma
de Madrid for their hospitality and for many discussions in which the basis for the results presented here was laid.

\bibliographystyle{alpha}
\bibliography{biblioNC}

\def\cprime{$'$}
\begin{thebibliography}{BEvB09}

\bibitem[AF12]{AluffiFaber12}
P.~Aluffi and E.~Faber.
\newblock {Splayed divisors and their Chern classes}.
\newblock 2012.
\newblock {\tt arXiv:1207.4202v2 [math.AG]}.

\bibitem[Ale86]{Aleksandrov86}
A.~G. Aleksandrov.
\newblock Euler-homogeneous singularities and logarithmic differential forms.
\newblock {\em Ann. Global Anal. Geom.}, 4(2):225--242, 1986.

\bibitem[Ale90]{Aleksandrov90}
A.~G. Aleksandrov.
\newblock {Nonisolated Saito singularities}.
\newblock {\em Math. USSR Sbornik}, 65(2):561--574, 1990.

\bibitem[BEvB09]{Buchweitz06}
R.-O. Buchweitz, W.~Ebeling, and H.C.~Graf von Bothmer.
\newblock Low-dimensional singularities with free divisors as discriminants.
\newblock {\em J. Algebraic Geom.}, 18(2):371--406, 2009.

\bibitem[BM06]{BuchweitzMond}
R.-O. Buchweitz and D.~Mond.
\newblock Linear free divisors and quiver representations.
\newblock In {\em Singularities and computer algebra}, volume 324 of {\em
  London Math. Soc. Lecture Note Ser.}, pages 41--77. Cambridge Univ. Press,
  Cambridge, 2006.

\bibitem[Bod04]{Bodnar04}
G.~Bodn{\'a}r.
\newblock Algorithmic tests for the normal crossing property.
\newblock In {\em Automated deduction in geometry}, volume 2930 of {\em Lecture
  Notes in Comput. Sci.}, pages 1--20. Springer, Berlin, 2004.

\bibitem[Bud12]{Budur10}
N.~Budur.
\newblock {Singularity invariants related to Milnor fibers: survey}.
\newblock In {\em Recent Trends in Zeta Functions in Algebra and Geometry},
  volume 566 of {\em Contemp. Math.}, pages 161--187, Providence, RI, 2012.
  Amer. Math. Soc.

\bibitem[CNM96]{CNM96}
F.~Castro\phantom{l}Jim{\'e}nez, L.~Narv{\'a}ez\phantom{l}Macarro, and D.~Mond.
\newblock Cohomology of the complement of a free divisor.
\newblock {\em Trans. Amer. Math. Soc.}, 348(8):3037--3049, 1996.

\bibitem[Dam96]{Damon96}
J.~Damon.
\newblock Higher multiplicities and almost free divisors and complete
  intersections.
\newblock {\em Mem. Amer. Math. Soc.}, 123(589):x+113, 1996.

\bibitem[dJP00]{dJP}
T.~de~Jong and G.~Pfister.
\newblock {\em Local analytic geometry}.
\newblock Advanced Lectures in Mathematics. Vieweg, Braunschweig -- Wiesbaden,
  2000.

\bibitem[Fab11]{Faber11}
E.~Faber.
\newblock Normal crossings in local analytic geometry.
\newblock Phd thesis, Universit\"at Wien, 2011.

\bibitem[Fab12]{Faber12}
E.~Faber.
\newblock {Characterizing normal crossing hypersurfaces}.
\newblock 2012.
\newblock {\tt arXiv:1202.6276v1 [math.AG]}.

\bibitem[FH10]{FaberHauser10}
E.~Faber and H.~Hauser.
\newblock Today's menu: geometry and resolution of singular algebraic surfaces.
\newblock {\em Bull. Amer. Math. Soc.}, 47(3):373--417, 2010.

\bibitem[FV93]{FlennerVogel93}
H.~Flenner and W.~Vogel.
\newblock On multiplicities of local rings.
\newblock {\em Manuscripta Math.}, 78(1):85--97, 1993.

\bibitem[GH85]{GH}
T.~Gaffney and H.~Hauser.
\newblock Characterizing singularities of varieties and of mappings.
\newblock {\em Invent. Math.}, 81(3):427--447, 1985.

\bibitem[GP08]{GreuelPfister08}
G.-M. Greuel and G.~Pfister.
\newblock {\em A Singular introduction to commutative algebra}.
\newblock Springer-Verlag, Berlin, 2008.
\newblock Second, extended edition. With contributions by Olaf Bachmann,
  Christoph Lossen and Hans Sch{\"o}nemann. With 1 CD-ROM (Windows, Macintosh
  and UNIX).

\bibitem[GS09]{GrangerSchulze09}
M.~Granger and M.~Schulze.
\newblock Initial logarithmic {L}ie algebras of hypersurface singularities.
\newblock {\em J. Lie Theory}, 19(2):209--221, 2009.

\bibitem[HM93]{HM93}
H.~Hauser and G.~M{\" u}ller.
\newblock {Affine varieties and Lie algebras of vector fields}.
\newblock {\em Manuscripta Math.}, 80:309--337, 1993.

\bibitem[Li09]{LiLi}
Li~Li.
\newblock Wonderful compactification of an arrangement of subvarieties.
\newblock {\em Michigan Math. J.}, 58(2):535--563, 2009.

\bibitem[Mat86]{Matsumura86}
H.~Matsumura.
\newblock {\em Commutative ring theory}, volume~8 of {\em Cambridge Studies in
  Advanced Mathematics}.
\newblock Cambridge University Press, Cambridge, 1986.

\bibitem[MS10]{MondSchulze10}
D.~Mond and M.~Schulze.
\newblock {Adjoint divisors and free divisors}.
\newblock 2010.
\newblock {\tt arXiv:1001.1095v2 [math.AG]}.

\bibitem[MY82]{MY}
J.~N. Mather and S.~S.-T. Yau.
\newblock {Classification of Isolated Hypersurface Singularities by their
  Moduli Algebras}.
\newblock {\em Invent. Math.}, 69:243--251, 1982.

\bibitem[OT92]{OrlikTerao92}
P.~Orlik and H.~Terao.
\newblock {\em Arrangements of hyperplanes}, volume 300 of {\em Grundlehren der
  Mathematischen Wissenschaften}.
\newblock Springer-Verlag, Berlin, 1992.

\bibitem[Sai80]{Saito80}
K.~Saito.
\newblock Theory of logarithmic differential forms and logarithmic vector
  fields.
\newblock {\em J. Fac. Sci. Univ. Tokyo}, 27(2):265--291, 1980.

\bibitem[Sai81]{Saito81}
K.~Saito.
\newblock Primitive forms for a universal unfolding of a function with an
  isolated critical point.
\newblock {\em J. Fac. Sci. Univ. Tokyo Sect. IA Math.}, 28(3):775--792, 1981.

\bibitem[Sek08]{Sekiguchi08}
J.~Sekiguchi.
\newblock {Three Dimensional Saito Free Divisors and Singular Curves}.
\newblock {\em J. Sib. Fed. Univ. Math. Phys.}, 1:33--41, 2008.

\bibitem[Sim06]{Simis06}
A.~Simis.
\newblock Differential idealizers and algebraic free divisors.
\newblock In {\em Commutative algebra}, volume 244 of {\em Lect. Notes Pure
  Appl. Math.}, pages 211--226. 2006.

\end{thebibliography}



\end{document}